\title{Links between orthogonal arrays, association schemes and partial geometric designs}
\theoremstyle{plain}
\newtheorem{thm}{Theorem}
\newtheorem{prop}[thm]{Proposition}
\newtheorem{lem}[thm]{Lemma}
\newtheorem{cor}[thm]{Corollary}
\newtheorem{defn}[thm]{Definition}
\newtheorem{rem}[thm]{Remark}
\newtheorem{ex}[thm]{Example}
\newcommand{\mbb}{\mathbb}
\newcommand{\mc}{\mathcal}
\newcommand{\hs}{\hspace{1mm}}
\newcommand{\fl}{\flushleft}
\newcommand{\ti}{\textit}
\newcommand{\flf}{\mathbf}
\newcommand{\F}{\mathbb{F}}
\def \half {\frac12}
\def \tbf{\textbf}
\def \xr {$(X, \{R_i\}_{0 \le i \le d})$}
\def \vhs {\vskip 0.1in}
\def \x {\mathcal{X}}
\def \< {\langle }
\def \> {\rangle }
\definecolor{red}{rgb}{1,0,0}
\begin{document}
\date{\today}
\pagestyle{plain}

\author[K.~Nowak]{Kathleen Nowak}
\address{Department of Mathematics, Iowa State University,
Ames, Iowa, 50011, U. S. A.} \email[K.~Nowak]{knowak@iastate.edu}

\author[O.~Olmez]{Oktay Olmez}
\address{Department of Mathematics,
Ankara University, Tandogan, Ankara, 06100, Turkey}
\email[O. ~Olmez]{oolmez@ankara.edu.tr}

\author[S. Y.~Song]{Sung Y. Song}
\address{Department of Mathematics, Iowa State University, Ames, Iowa, 50011, U. S.
A.} \email[S. Y.~Song]{sysong@iastate.edu}


\maketitle

\begin{abstract}  In this paper, we show how certain three-class association schemes and orthogonal arrays give rise to partial geometric designs. We also investigate the connections between partial geometric designs and certain regular graphs having three or four distinct eigenvalues, three-class association schemes, orthogonal arrays of strength two and particular linear codes. We give various characterizations of these graphs, association schemes and orthogonal arrays in terms of partial geometric designs. We also give a list of infinite families of directed strongly regular graphs arising from the partial geometric designs obtained in this paper. 
\end{abstract}

\section{Introduction}

Partial geometric designs (also known as $1\frac12$-designs) were recently shown to produce directed strongly regular graphs \cite{BOS}. In \cite{Ol} and \cite{NOS} we uncovered which difference sets and difference families produce partial geometric designs.  Here we take the next step and explore the link between these designs and other combinatorial structures. Specifically, we establish connections with strongly regular graphs, certain wreath product association schemes, three-class association schemes, and specific orthogonal arrays of strength two.

It is well-known that many strongly regular graphs give rise to symmetric 2-designs and partial geometric designs (cf. \cite{Br}, \cite{Ne}, \cite{va}). In particular, every complete multipartite regular graph gives rise to a partial geometric design.
Additionally, any strongly regular graph satisfying $\lambda=\mu$ gives rise to a symmetric 2-$(v, k, \lambda)$ design, which is, in turn, a partial geometric design. (See, for example, \cite{Ne, Sp, GS}.)
In fact it is shown that a strongly regular graph with parameters $(v,k,\lambda, \mu)$ gives rise to a partial geometric design if and only if it satisfies either $k=\mu$ or $\lambda=\mu$ (cf. Section \ref{sec-srgs} below).

Every strongly regular graph is realized as a relation graph of some association scheme. In particular, a nontrivial strongly regular graph and its complement are the relation graphs of a two-class association scheme. However, there are graphs that give rise to partial geometric designs but are not realized as relation graphs of association schemes.
Our investigation into finding the source of partial geometric designs begins with studying the characteristics of the graphs that give rise to such designs. We observe that some of these graphs arise as the relation graphs of certain three-class association schemes. This observation leads us to explore the links between partial geometric designs, graphs, and association schemes. Some of these association schemes come from certain orthogonal arrays of strength two and linear codes. As a consequence, we are able to find an infinite family of partial geometric designs and give a list of directed strongly regular graphs arising from these partial geometric designs.

The organization of the paper is as follows. In the following section, we introduce notation that will be used throughout and recall some basic terms from the theory of designs and association schemes.

In Section \ref{sec-srgs}, we characterize the strongly regular graphs that give rise to partial geometric designs.

In Section \ref{sec-wrth}, we recall that the wreath product of an arbitrary association scheme with the trivial association scheme possesses a relation graph isomorphic to a strongly regular graph. Hence such a wreath product association scheme gives rise to a partial geometric design. Conversely, if an imprimitive association scheme of class three or more contains exactly one strongly regular relation graph, then such a scheme must be isomorphic to the wreath product of a scheme with a one-class association scheme.

In Section \ref{sec-assc}, we describe parameter sets of certain three-class association schemes that give rise to partial geometric designs. In particular, we show that if a 3-class symmetric self-dual association scheme of order $3m^2$ satisfies certain parametric conditions, then its adjacency matrices $A_0, A_1, A_2, A_3$ satisfy the following identities for some constants $\alpha_i$ and $\beta_i$:
\begin{eqnarray*}
A_1^3=\beta_1A_1 +\alpha_1(J-A_1),\\
A_2^3=\beta_2A_2+\alpha_2(J-A_2),\\
(A_3+A_0)^3=\beta_3(A_3+A_0)+\alpha_3(J-A_3-A_0).\end{eqnarray*}

In Section \ref{sec-conc}, we then provide concrete examples of such association schemes coming from Hamming codes and certain orthogonal arrays of strength two.

In Section \ref{sec-dsrg}, we provide the parameter sets of directed strongly regular graphs  obtained from the partial geometric designs constructed in this paper by applying the relationship between partial geometric designs and directed strongly regular graphs given by Brouwer-Olmez-Song in \cite{BOS}. Finally, we close with some last remarks on our construction of partial geometric designs.


\section{Preliminaries}
Here we recall some basic facts about block designs and association schemes. We also set the notation that will be used throughout the paper.
\subsection{Designs}

A \textit{block design} is a pair $(P, \mc{B})$ where
 $P$ is a finite set, the elements of which are called \textit{points}, and $\mc{B}$ is a finite collection (possibly multiset) of nonempty subsets of $P$ called \textit{blocks}.

A \ti{tactical configuration}, often also called a \textit{$1$-design}, with parameters $(v,b,k,r)$ is a design $(P,\mc{B})$ with $|P| = v$ and $|\mc{B}| = b$ such that each block consists of $k$ points and each point belongs to $r$ blocks. A 2-$(v,k,\lambda)$ \textit{design} is a 1-design satisfying the added condition that every pair of distinct points is contained in exactly $\lambda$ blocks.


A \textit{partial geometric design} with parameters $(v,b,k,r; \alpha, \beta)$ is a $1$-design $(P, \mc{B})$ with parameters $(v, b, k, r)$ satisfying the `partial geometric' property:
For every point $x \in P$ and every block $B \in \mc{B}$, the number of incident point-block pairs $(y, C)$ such that $y\in B$ and $x\in C$ is $\alpha$ if $x\notin B$ and is $\beta$ if $x\in B$ for some constants $\alpha$ and $\beta$. That is,
\[|\{(y,C) \colon  y\in B\cap C, C\ni x\}|=\left \{ \begin{array}{ll}
\alpha  &\mbox{if } x\notin B,\\
\beta &\mbox{if } x\in B.\\ \end{array}\right . \qquad\]
If $N$ is the point-block incidence matrix of a $(v,b,k, r;\alpha, \beta)$-partial geometric design, then it satisfies
\begin{equation}\label{eq2} JN=kJ, \quad NJ=rJ,\quad  NN^TN=\beta N+\alpha (J-N),\end{equation}
where $N^{T}$ denotes the transpose of $N$, and $J$ is the all-ones matrix.
A 2-$(v,k,\lambda)$ design is partial geometric with $\alpha = k\lambda$ and
$\beta=\lambda\frac{v-1}{k-1}+k\lambda-\lambda$. We will say that a partial geometric design $(P,\mc{B})$ is \ti{symmetric} whenever $v=b$ (and so, $k=r$).
When the design is symmetric, its parameters are simply denoted by $(v,k;\alpha, \beta)$, in short.

In this paper, by the phrase, ``\ti{graph $\Gamma=(V, E)$ gives rise to design $(P, \mc{B})$}," we mean that the adjacency matrix $A$ of $\Gamma$ is \ti{equivalent} to the incidence matrix $N$ of $(P, \mc{B})$.
That is, for each $v\in V$, if we let $N_v=\{x\in V\colon (x, v)\in E\}$ and $\mc{N}=\{N_v\colon v\in V\}$, the pair $(V, \mc{N})$ forms a design that is \ti{isomorphic} to $(P, \mc{B})$.\footnote{There exist bijections $f\colon V\rightarrow P$ and $\phi\colon \mc{N}\rightarrow \mc{B}$ such that $x\in N_v$ if and only if $f(x)\in  \phi(N_v)$.}

\subsection{Association schemes and their Bose-Mesner algebras}

Let $X$ be an $n$-element set, and let
$R_0, R_1, \dots, R_d$ be subsets of $X\times X:=\{(x,y): x,y\in X\}$ with $R_0=\{(x,x): x\in X\}$.
Let $A_i$ be the $n\times n$ \{0, 1\}-matrix representing $R_i$: i.e., \[\left (A_i\right )_{xy}=\left \{ \begin{array}{ll} 1 & \mbox{if } (x,y)\in R_i\\
0 & \mbox{otherwise.}\\ \end{array}\right . \]
The pair $\mc{X}=\left (X, \{R_i\}_{0\le i\le d}\right )$ is called a \ti{$d$-class (symmetric) association scheme} if $A_0, A_1, \dots, A_d$ satisfy the following:
\begin{enumerate}
\item[(1)] $A_0+A_1+\cdots +A_d=J$, where $J$ is the all-ones matrix and $A_0=I$, the identity matrix,
\item[(2)] for each $i\in \{0, 1, \dots, d\}$, $A_i^{T}=A_{i}$,
\item[(3)] for any $h, i, j\in \{0, 1, \dots, d\}$, there exists a constant $p_{ij}^h$ such that
\[A_iA_j=\sum\limits_{h=0}^d p_{ij}^h A_h.\]
\end{enumerate}
The matrices $A_0, A_1, \dots, A_d$ defined above are called the \ti{adjacency matrices} of $\mc{X}$, and the graphs $(X, R_1), (X, R_2), \dots, (X, R_d)$, are called the \ti{relation graphs} of $\mc{X}$.  The constants $p_{ij}^h$ are called the \ti{intersection numbers} of $\mc{X}$, and for any $(x,y)\in R_h$
 \[p_{ij}^h=|\{z\in X: (x,z)\in R_i, \ (z,y)\in R_j\}|.\]
Let $B_i$, $i\in \{0, 1, \dots, d\}$, be the $i$th \ti{intersection matrix} defined by
\[\left (B_i\right )_{jh}=p_{ij}^h.\] Then $B_iB_j=\sum\limits_{h=0}^d p_{ij}^h B_h$.

\vhs

Let $\x =$ \xr\ be an association scheme  with its adjacency matrices $A_0, A_1, \dots, A_d$ and intersection matrices $B_0, B_1, \dots, B_d$. Then the
$\mbb{C}$-space with basis $\{A_0, A_1, \dots, A_d\}$ is an algebra over
the complex numbers, called the {\it Bose-Mesner algebra}  of
$\x$, denoted by $\mc{A}(\mc{X})$ or $\langle A_0, A_1, \dots, A_d\rangle$.
The $\mbb{C}$-algebra generated by $\{B_0, B_1, \dots, B_d\}$ is called the \ti{intersection algebra} of $\mc{X}$.
The Bose-Mesner algebra $\mc{A}(\mc{X})$ and the intersection algebra $\langle B_0, B_1, \dots, B_d\rangle$ are isomorphic $\mbb{C}$-algebras induced by the correspondence $A_i\mapsto B_i$. (For more information, see for example, \cite{BI, BCN}.)


\section{Strongly regular graphs with either $k=\mu$ or $\lambda=\mu$} \label{sec-srgs}
Strongly regular graphs arise from various combinatorial structures, especially in connection with designs and codes. For a complete characterization of partial geometric designs as well as a thorough investigation of their connection to partial geometries and strongly regular graphs, we refer the readers to Bose, Shrikhande and Singh \cite{BSS} and Neumaier \cite{Ne}. In this section, we characterize which strongly regular graphs give rise to symmetric partial geometric designs.

\begin{lem}\label{srg-dsgn}
Let $\Gamma$ be a strongly regular graph with parameters $(v,k,\lambda,\mu)$. Let $A$ be the adjacency matrix of $\Gamma$. Then $A^3=\beta A+\alpha (J-A)$ for some integers $\alpha$ and $\beta$ if and only if either $\lambda=\mu$ or $k=\mu$. (In this case,  $\alpha= (\lambda-\mu)\mu +\mu k$ and $\beta=(\lambda-\mu)^2+k-\mu+(\lambda-\mu)\mu +\mu k$.)
\end{lem}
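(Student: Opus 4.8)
The plan is to reduce the cubic identity to a statement about a single coefficient, after expanding $A^3$ in the basis $\{I, A, J\}$. My only inputs will be the quadratic relation defining a strongly regular graph,
\begin{equation*}
A^2 = (k-\mu)I + (\lambda-\mu)A + \mu J,
\end{equation*}
together with the regularity identity $AJ = JA = kJ$. I would work entirely inside the commutative algebra generated by $A$ and $J$, so that everything reduces to bookkeeping with $I$, $A$, and $J$.

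The main computation is to expand $A^3 = A\cdot A^2$. Multiplying the quadratic relation on the left by $A$ and using $AJ = kJ$ gives $A^3 = (k-\mu)A + (\lambda-\mu)A^2 + \mu k J$, and then substituting the quadratic relation once more for $A^2$ and collecting terms yields
\begin{equation*}
A^3 = (\lambda-\mu)(k-\mu)\, I + \bigl[(\lambda-\mu)^2 + (k-\mu)\bigr]\, A + \bigl[(\lambda-\mu)\mu + \mu k\bigr]\, J.
\end{equation*}
This is a routine substitution, so I would not dwell on the algebra. Note that both bracketed coefficients are integers, which will account for the integrality of $\alpha$ and $\beta$.

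For the equivalence, I would compare this with the target relation rewritten as $A^3 = \beta A + \alpha(J-A) = (\beta-\alpha)A + \alpha J$, which carries \emph{no} $I$-term. For a proper strongly regular graph (connected and not complete), $A$ has exactly three distinct eigenvalues, hence is not a linear combination of $I$ and $J$; consequently $I$, $A$, $J$ are linearly independent. Therefore the target relation holds for some $\alpha,\beta$ if and only if the coefficient of $I$ in the expansion vanishes, that is, $(\lambda-\mu)(k-\mu)=0$, which is precisely the condition $\lambda=\mu$ or $k=\mu$. When it holds, matching the $J$- and $A$-coefficients forces $\alpha = (\lambda-\mu)\mu + \mu k$ and $\beta - \alpha = (\lambda-\mu)^2 + (k-\mu)$, giving the values stated in the lemma.

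The one point needing genuine care — the main obstacle — is the appeal to linear independence of $I$, $A$, $J$, since this is exactly what licenses reading off that the $I$-coefficient must be zero in the ``only if'' direction. I would justify it via the three-distinct-eigenvalue property of a nondegenerate strongly regular graph, and explicitly set aside the degenerate cases (complete graphs, empty graphs, and disjoint unions of complete graphs together with their complements), where $A$ satisfies a linear relation in $I$ and $J$ and the argument would have to be adjusted or becomes exceptional. The ``if'' direction, by contrast, needs no independence at all and follows immediately by substituting the vanishing coefficient into the expansion.
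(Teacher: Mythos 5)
Your core computation is exactly the paper's proof: expand $A^3$ via the quadratic relation $A^2=(k-\mu)I+(\lambda-\mu)A+\mu J$ together with $AJ=kJ$, and your expansion agrees with the paper's coefficient by coefficient. You go one step further than the paper by making explicit why one may match coefficients in the ``only if'' direction, namely the linear independence of $I$, $A$, $J$ --- a point the paper silently assumes. That part is an improvement in rigor.

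The gap is in your handling of the cases your independence argument does not reach, and in the factual claims you make about them. The matrices $I$, $A$, $J$ are linearly dependent only for complete and empty graphs: if $aI+bA+cJ=0$ nontrivially, then evaluating the $(x,y)$ entry at a non-adjacent pair of distinct vertices gives $c=0$ (such a pair exists unless the graph is complete), evaluating at an adjacent pair then gives $b=0$ (such a pair exists unless the graph is empty), and finally $a=0$. In particular, for a disjoint union $cK_n$ and for its complement $\overline{cK_n}$ (complete multipartite), the matrices $I$, $A$, $J$ \emph{are} independent, contrary to your assertion that $A$ satisfies a linear relation in $I$ and $J$ for these families. Worse, setting aside $\overline{cK_n}$ is self-defeating: it is connected, non-complete, and has the three distinct eigenvalues $(c-1)n$, $0$, $-n$, so your own main argument covers it --- and it must, because the paper's very next corollary applies the lemma precisely to $\overline{cK_n}$ (this is the $k=\mu$ family). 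The only strongly regular graphs your eigenvalue argument genuinely misses are the disconnected ones $cK_n$, which have just two distinct eigenvalues; for those your proof as written leaves the ``only if'' direction unproven. The fix is simple: justify the independence of $I$, $A$, $J$ by the entry-wise argument above, which is valid for every non-complete, non-empty graph whether connected or not, instead of by counting eigenvalues; then no case needs to be set aside.
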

\begin{proof}
Given a strongly regular graph $\Gamma$ with parameters $(v,k,\lambda,\mu)$, the adjacency matrix $A$ of $\Gamma$ satisfies the identity: \[A^2=kI+\lambda A + \mu (J-I-A).\]
Thus, we have that
\[
A^3=\{(\lambda-\mu)^2+ (\lambda-\mu)\mu +k\mu+k-\mu\}A+(\lambda-\mu)(k-\mu)I+\{(\lambda-\mu)\mu +\mu k\}(J-A).
\]
Therefore, there exist $\alpha$ and $\beta$ such that $A^3=\beta A+\alpha (J-A)$ if and only if
\[(\lambda-\mu)(k-\mu)=0,\ (\lambda-\mu)^2+k-\mu+(\lambda-\mu)\mu +\mu k=\beta \mbox{ and }  (\lambda-\mu)\mu +\mu k=\alpha.\]
Hence the proof follows.
\end{proof}

Every complete multipartite strongly regular graph can be viewed as the complement of $c$-copies of the complete graph $K_n$ on $n$ vertices for some integers $c$ and $n$ (where $c, n\ge 2$).  We denote such a graph by $\overline{cK_n}$.

\begin{cor}
The complete multipartite strongly regular graph $\overline{cK_n}$ gives rise to a symmetric partial geometric design with parameters $(cn, (c-1)n; (c^2-3c+2)n^2, (c^2-3c+3)n^2)$.
\end{cor}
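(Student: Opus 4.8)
The plan is to recognize $\overline{cK_n}$ as the complete multipartite graph with $c$ parts each of size $n$, read off its strongly regular parameters directly from this partition structure, and then invoke Lemma \ref{srg-dsgn} as a black box. First I would observe that in $cK_n$ two distinct vertices are adjacent exactly when they lie in a common clique, so in the complement $\overline{cK_n}$ two distinct vertices are adjacent exactly when they lie in \emph{different} parts; this is precisely the complete multipartite graph on $c$ parts of size $n$. From here the parameters follow by counting. We have $v = cn$, and since each vertex is joined to all vertices outside its own part, $k = (c-1)n$. For $\lambda$, pick two adjacent vertices (necessarily in distinct parts); a common neighbor is any vertex outside both of their parts, giving $\lambda = (c-2)n$. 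For $\mu$, pick two non-adjacent vertices (necessarily in the same part); a common neighbor is any vertex outside that shared part, giving $\mu = (c-1)n$. Thus $(v,k,\lambda,\mu) = (cn,\,(c-1)n,\,(c-2)n,\,(c-1)n)$, and crucially $k = \mu$.

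With $k=\mu$ established, Lemma \ref{srg-dsgn} applies and gives $A^3 = \beta A + \alpha(J-A)$ for the adjacency matrix $A$ of $\overline{cK_n}$, with $\alpha = (\lambda-\mu)\mu + \mu k$ and $\beta = (\lambda-\mu)^2 + k - \mu + (\lambda-\mu)\mu + \mu k$. The next step is simply to substitute the parameters just computed. Using $\lambda-\mu = -n$ and $k-\mu = 0$, a short calculation yields $\alpha = (c-1)(c-2)n^2 = (c^2-3c+2)n^2$ and $\beta = n^2 + \alpha = (c^2-3c+3)n^2$, which match the asserted values.

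Finally I would translate this algebraic identity into the partial geometric condition. Taking $N = A$ as the incidence matrix in the sense defined in the preliminaries, symmetry of $A$ gives $A^T = A$, so $NN^TN = A^3$; and $k$-regularity of $A$ gives $JA = AJ = kJ$, i.e. $JN = kJ$ and $NJ = rJ$ with $r = k = (c-1)n$. Hence all three relations of \eqref{eq2} hold with the stated $\alpha$ and $\beta$, so the design $(V,\mc{N})$ arising from $\overline{cK_n}$ is symmetric (as $v=b=cn$ and $k=r$) and partial geometric with parameters $(cn,\,(c-1)n;\,(c^2-3c+2)n^2,\,(c^2-3c+3)n^2)$. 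I expect no genuine obstacle: the result is an immediate specialization of Lemma \ref{srg-dsgn}, and the only place demanding care is the correct counting of $\lambda$ and $\mu$ together with the observation that $k=\mu$.
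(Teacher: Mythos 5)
Your proposal is correct and follows essentially the same route as the paper: identify the strongly regular parameters $(v,k,\lambda,\mu)=(cn,(c-1)n,(c-2)n,(c-1)n)$ of $\overline{cK_n}$, observe $k=\mu$, and apply Lemma \ref{srg-dsgn}. The paper's proof is just a terse version of yours; your extra steps (the counting of $\lambda$ and $\mu$, the substitution giving $\alpha$ and $\beta$, and the check that regularity plus symmetry of $A$ yields all three conditions of \eqref{eq2}) are exactly the details the paper leaves implicit.
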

\begin{proof} This strongly regular graph has parameters
$$(v,k,\lambda, \mu)=(cn, (c-1)n, (c-2)n, (c-1)n).$$
The result now follows from Lemma \ref{srg-dsgn}.
\end{proof}
A strongly regular graph with parameters $(v,k,\lambda, \lambda)$ is sometimes called a $(v,k,\lambda)$-graph. The adjacency matrix $A$ of a $(v,k,\lambda)$-graph satisfies identity $A^2=kI+\lambda (J-I)$; therefore, it gives a symmetric 2-$(v,k,\lambda)$-design. Since a symmetric 2-$(v,k,\lambda)$-design is a partial geometric design with parameters $(v,k;k\lambda, k\lambda+k-\lambda)$, so we also have:
\begin{cor}
A $(v,k,\lambda)$-graph gives rise to a partial geometric design with parameters $$(v,k;k\lambda, k\lambda+k-\lambda).$$
\end{cor}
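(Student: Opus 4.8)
The plan is to recognize the stated result as the $\lambda=\mu$ instance of Lemma~\ref{srg-dsgn} and then read off the partial geometric parameters directly. By definition a $(v,k,\lambda)$-graph is a strongly regular graph with parameters $(v,k,\lambda,\mu)$ in which $\mu=\lambda$, so the hypotheses of that lemma are automatically met and everything specializes to that case.

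First I would identify the adjacency matrix $A$ with an incidence matrix $N$. Since $A$ is symmetric and the graph is $k$-regular, we have $JA=AJ=kJ$, so the conditions $JN=kJ$ and $NJ=rJ$ of \eqref{eq2} hold with $r=k$; moreover $NN^TN=AA^TA=A^3$. Thus the only remaining requirement for $A$ to be the incidence matrix of a partial geometric design is the cubic identity $A^3=\beta A+\alpha(J-A)$, which is exactly the conclusion of Lemma~\ref{srg-dsgn} under the hypothesis $\lambda=\mu$. Setting $\mu=\lambda$ in the lemma's expressions gives $\alpha=(\lambda-\mu)\mu+\mu k=k\lambda$ and $\beta=(\lambda-\mu)^2+k-\mu+(\lambda-\mu)\mu+\mu k=k\lambda+k-\lambda$. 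Because $v=b$ and $k=r$, the resulting design is symmetric with parameters $(v,k;k\lambda,k\lambda+k-\lambda)$, as claimed.

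Alternatively, and more in the spirit of the remarks preceding the statement, I would use that a $(v,k,\lambda)$-graph satisfies $A^2=kI+\lambda(J-I)$, so $A=N$ is the incidence matrix of a symmetric $2$-$(v,k,\lambda)$ design. Such a design is partial geometric with $\alpha=k\lambda$ and $\beta=\lambda\frac{v-1}{k-1}+k\lambda-\lambda$ by the computation recorded in the Preliminaries. The one point that must be checked is that these two forms of $\beta$ agree, and this follows from the standard counting relation $\lambda(v-1)=k(k-1)$ for a symmetric $2$-design, which yields $\lambda\frac{v-1}{k-1}=k$ and hence $\beta=k\lambda+k-\lambda$. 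I do not expect any genuine obstacle here, since the statement is an immediate specialization of Lemma~\ref{srg-dsgn}; the only care needed is the bookkeeping that symmetry of $A$ turns the partial geometric property $NN^TN=\beta N+\alpha(J-N)$ into the cubic relation $A^3=\beta A+\alpha(J-A)$, together with the Fisher-type identity that reconciles the two expressions for $\beta$.
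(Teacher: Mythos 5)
Your proposal is correct, and in fact it contains two complete arguments. The paper's own justification is your ``alternative'' route: it notes that the adjacency matrix of a $(v,k,\lambda)$-graph satisfies $A^2=kI+\lambda(J-I)$, hence is the incidence matrix of a symmetric 2-$(v,k,\lambda)$ design, and then invokes the fact from the Preliminaries that a 2-design is partial geometric with $\alpha=k\lambda$ and $\beta=\lambda\frac{v-1}{k-1}+k\lambda-\lambda$; the paper leaves implicit the reconciliation $\lambda\frac{v-1}{k-1}=k$ (via $\lambda(v-1)=k(k-1)$ for a symmetric design), which you spell out correctly. Your primary route instead specializes Lemma~\ref{srg-dsgn} at $\mu=\lambda$, obtaining $\alpha=(\lambda-\mu)\mu+\mu k=k\lambda$ and $\beta=(\lambda-\mu)^2+k-\mu+(\lambda-\mu)\mu+\mu k=k\lambda+k-\lambda$ directly; this is the same mechanism the paper uses for its other corollary (the complete multipartite case), and it has the advantage of bypassing the 2-design detour and the Fisher-type identity entirely, at the cost of needing the bookkeeping you note: symmetry of $A$ and $k$-regularity turn the design conditions $JN=kJ$, $NJ=rJ$, $NN^TN=\beta N+\alpha(J-N)$ into exactly the regularity and cubic identities supplied by the lemma. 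Both arguments are sound and yield the same parameters, so your write-up is, if anything, more complete than the paper's.
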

\begin{rem} \label{rem-srg}
\begin{enumerate}
\item[(1)] We note that both the Hamming graph $H(2,4)$ and the Shrikhande graph are $(16, 6, 2)$-graphs. Although these two graphs are non-isomorphic  they give rise to the same 2-$(16, 6, 2)$-design. (cf. \cite[Ch.2 and Ch.4]{CVL}.) Hence, we have a partial geometric design with parameters $(v, k;\alpha, \beta)=(16, 6; 12, 16)$ as all 2-designs are partial geometric.
\item[(2)] We also note that there are many $(v,k,\lambda)$-graphs: Examples of small graphs include $(35, 15, 6)$, $(35, 18, 9)$, $(36, 21, 12)$,  $(45, 12, 3)$, $(63, 32, 16)$ and $(64, 36, 20)$. To see the current list of such strongly regular graphs, visit the homepage of E. Spence \cite{Sp} or A. Brouwer \cite{Br0}.\end{enumerate}\end{rem}

\section{Wreath product of a scheme by a one-class association scheme}\label{sec-wrth}
In this section, we establish a connection between partial geometric designs and wreath products of association schemes. We show that every wreath product association scheme in which one factor is a trivial association scheme gives rise to a partial geometric design. It follows from the fact that such a wreath product association scheme has a relation graph which is strongly regular with $k = \mu$.

Let $\mc{X}=(X, \{R_i\}_{0\le i\le d})$ and $\mc{Y}=(Y,\{S_j\}_{0\le j\le e})$ be association schemes of order $|X|=m$ and $|Y|=n$, respectively. Let $\{A_i\}_{0\le i\le d}$ and $\{C_j\}_{0\le j\le e}$ be the sets of adjacency matrices of $\mc{X}$ and $\mc{Y}$, respectively. Then the adjacency matrices of the wreath product $\mc{X}\wr \mc{Y}$ of $\mc{X}$ and $\mc{Y}$ are \[I_n \otimes A_0, I_n\otimes A_1, \dots, I_n\otimes A_d, C_1\otimes J_m, C_2\otimes J_m, \dots, C_e\otimes J_m,\]
where $A\otimes C=(a_{ij}C)$ denotes the Kronecker product of $A=(a_{ij})$ and $C$.
With this ordering of the adjacency matrices, the relation
matrix of $\mc{X}\wr\mc{Y}$ is given by \[R(\mc{X}\wr\mc{Y})= I_n\otimes R(\mc{X}) + [R(\mc{Y})+d(J_n-I_n)]\otimes J_m,\] where $R(\mc{X})=\sum\limits_{h=0}^d hA_h$ and $R(\mc{Y})=\sum\limits_{h=0}^e hC_h$.

\vhs
Let $\x =$ \xr\ be an association scheme  with its Bose-Mesner algebra $\mc{A}(\mc{X})=\langle A_0, A_1, \dots, A_d\rangle$.
For any relations $R_i$ and $R_j$, define
$$
R_i R_j := \left \{ R_h \colon p_{ij}^h \ne 0 \right \}.
$$
Then, for a nonempty subset $H$ of $\{0, 1, \dots, d\}$, $\{R_h\}_{h \in H}$ is called a {\it closed subset} if
$R_i R_j \subseteq \{R_h\}_{h \in H}$ for any $i, j \in H$.
If $\{R_h\}_{h \in H}$ is a closed subset, then the $\mbb{C}$-space
with basis $\{A_h\}_{h \in H}$ is a subalgebra of $\mc{A}(\x)$, called a {\it Bose-Mesner
subalgebra} of $\x$, denoted by $\mc{A}_H$ or $\mc{A}_H(\mc{X})$.

Let $\x =$ \xr\ be an association scheme with $\mc{A}(\mc{X})=\langle A_0, A_1, \dots, A_d\rangle$, and let $\{R_h\}_{h \in H}$ be a closed subset of $\x$.
Let $\mc{Y} = (Y,\{S_j\}_{0\le j\le e})$ be an association scheme with its Bose-Mesner algebra $\mc{A}(\mc{Y})=\langle C_0, C_1, \dots, C_e\rangle $. Let $\{S_g\}_{g \in G}$ be a closed subset of $\mc{Y}$. We say that the Bose-Mesner subalgebras
$\mc{A}_H(\mc{X})$ and
$\mc{A}_G(\mc{Y})$ are
{\it exactly isomorphic} if there is a bijection $\pi: H \to G$
such that the linear map from
$\mc{A}_H(\mc{X})$ to
$\mc{A}_G(\mc{Y})$  induced by
$A_h \mapsto C_{\pi(h)}$ for $h\in H$ is an algebra isomorphism.

The following properties of the Bose-Mesner algebra of a wreath product
of association schemes (See \cite{Bh, BST}) are useful for our discussion.

\begin{lem} \label{lem-wreath}
\label{lem-wreath}
Let $\x =$ \xr\ be an association scheme. Let $\mc{K}_n$ denote the one-class association scheme whose nontrivial relation graph is the complete graph $K_n$.  If $\x = \mc{Y}\wr \mc{K}_n$ for an association scheme
$\mc{Y} = (Y,\{S_j\}_{0\le j\le e})$ and $\mc{K}_n$, then $e = d-1$ and, by renumbering $R_1, R_2, \dots, R_d$ if necessary, the following
hold.

(i)
$\{R_0, R_1, \dots, R_{d-1} \}$ is a closed subset of $\x$ such that
the Bose-Mesner subalgebra $\mc{A}_{[d-1]}$ with basis $\{A_0, A_1, \dots, A_{d-1}\}$
is exactly isomorphic to the Bose-Mesner algebra of $\mc{Y}$.

(ii) Let $k_0, k_1, \dots, k_d$ denote the valency of $\mc{X}$, and let $m=\sum\limits_{i=0}^{d-1}k_i$. Then $|Y|=m$ and
\begin{equation}
\label{eq-kiad}
A_i A_d = k_i A_d, \ 1 \le i < d;
\end{equation}

\begin{equation}
\label{eq-adsq}
A_d^2 = m(n-1)(J_{mn}-A_d) +m(n-2) A_d
\end{equation}

and
\begin{equation}
\label{eq-adcu}
A_d^3 = m^2 (n^2-3n+2)(J_{mn}-A_d) + m^2(n^2-3n+3)A_d.
\end{equation}
\end{lem}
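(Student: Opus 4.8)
The plan is to make the factors completely explicit as Kronecker products and then reduce everything to elementary matrix arithmetic. Applying the definition of the wreath product recalled above with $\mathcal{Y}$ as the first (inner) factor of order $m$ and $\mathcal{K}_n$ as the second (outer) factor of order $n$, and writing $C_0 = I_m, C_1, \dots, C_{d-1}$ for the adjacency matrices of $\mathcal{Y}$, the adjacency matrices of $\mathcal{X} = \mathcal{Y}\wr \mathcal{K}_n$ are $A_0 = I_n\otimes C_0 = I_{mn}$, $A_i = I_n\otimes C_i$ for $1 \le i \le d-1$, and $A_d = (J_n - I_n)\otimes J_m$, since the unique nontrivial relation of $\mathcal{K}_n$ is represented by $J_n - I_n$. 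In particular $\mathcal{Y}$ has $d-1$ classes, so $e = d-1$; moreover $|Y| = m = \sum_{i=0}^{d-1} k_i$ (the valencies of $\mathcal{Y}$), and $k_d$, the valency of $A_d$, equals $m(n-1)$.

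For part (i) I would invoke the mixed-product property $(P\otimes Q)(R\otimes S) = PR\otimes QS$, which gives $(I_n\otimes C_i)(I_n\otimes C_j) = I_n\otimes (C_iC_j) = \sum_h p_{ij}^h\,(I_n\otimes C_h)$, where the $p_{ij}^h$ are the intersection numbers of $\mathcal{Y}$. Hence products of $A_0,\dots,A_{d-1}$ never involve $A_d$, so $\{R_0,\dots,R_{d-1}\}$ is a closed subset, and the linear map $A_i\mapsto C_i$ is an algebra isomorphism onto $\mathcal{A}(\mathcal{Y})$; that is, the subalgebra $\mathcal{A}_{[d-1]}$ is exactly isomorphic to the Bose-Mesner algebra of $\mathcal{Y}$.

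The three identities of part (ii) are then direct computations. Identity (\ref{eq-kiad}) follows from $A_iA_d = (I_n(J_n-I_n))\otimes(C_iJ_m) = (J_n-I_n)\otimes(k_iJ_m) = k_iA_d$, using $C_iJ_m = k_iJ_m$. For (\ref{eq-adsq}) I would compute $A_d^2 = (J_n-I_n)^2\otimes J_m^2$, expand $(J_n-I_n)^2 = (n-2)J_n + I_n = (n-2)(J_n-I_n) + (n-1)I_n$, and use $J_m^2 = mJ_m$; translating back via $(J_n-I_n)\otimes J_m = A_d$ and the identity $I_n\otimes J_m = J_{mn} - A_d$ yields the claimed expression. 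Finally (\ref{eq-adcu}) follows by multiplying (\ref{eq-adsq}) once more by $A_d$: rewriting (\ref{eq-adsq}) as $A_d^2 = m(n-1)J_{mn} - mA_d$ and using $A_dJ_{mn} = k_dJ_{mn} = m(n-1)J_{mn}$ gives $A_d^3 = m^2(n-1)(n-2)J_{mn} + m^2A_d$, and re-expressing this in the basis $\{J_{mn}-A_d,\,A_d\}$ with $(n-1)(n-2) = n^2-3n+2$ produces the stated formula.

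There is no genuine conceptual obstacle here; the work is entirely bookkeeping. The one place demanding care is the repeated translation between the ambient forms $J_n$, $I_n$, $J_n-I_n$ on the outer factor and the scheme-intrinsic basis $\{J_{mn}-A_d,\,A_d\}$, together with keeping the scalar $m$ from $J_m^2 = mJ_m$ consistently attached throughout. The only arithmetic spot where a sign or off-by-one slip could propagate is the final simplification, where one must verify $(n-1)(n-2) = n^2-3n+2$ and $(n-1)(n-2)+1 = n^2-3n+3$ to match the coefficients in (\ref{eq-adcu}).
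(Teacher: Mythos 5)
Your proposal is correct and follows essentially the same route as the paper's proof: writing the adjacency matrices as Kronecker products $A_i = I_n\otimes C_i$ and $A_d = (J_n-I_n)\otimes J_m$, computing $A_d^2$ via the mixed-product property, and obtaining $A_d^3$ by multiplying (\ref{eq-adsq}) by $A_d$ and reusing it. The only difference is that you spell out the closed-subset claim and identity (\ref{eq-kiad}) explicitly, where the paper dismisses them as clear; your arithmetic for (\ref{eq-adcu}) matches the stated coefficients.
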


\begin{proof}[Proof]
Let $C_j$ be the adjacency matrix of $\mc{Y}$ corresponding to $S_j$, $0 \le j \le e$. Then the adjacency matrices $A_i$ of $\mc{Y} \wr \mc{K}_n$ can be expressed as follows:
$$
A_0=I_n \otimes C_0, \ A_1=I_n \otimes C_1,  \dots,
A_{d-1}=I_n \otimes C_e, \ A_d=(J_n-I_n) \otimes J_{m}.
$$
Thus, $e = d-1$ and
\begin{eqnarray*}
A_d^2=((J_n-I_n) \otimes J_{m})^2  & = & (J_n-I_n)^2\otimes J_{m}^2 \\
& = & \left ( (n-2)J_n+I_n\right ) \otimes mJ_{m}\\
& = & m(n-2) (J_n-I_n) \otimes J_{m}  + m (n-1) I_n \otimes J_{m}
     \\
 & = & m(n-2) A_d+m(n-1)(J_{mn}-A_d).
\end{eqnarray*}
This verifies that (\ref{eq-adsq}) holds. We then obtain the identity (\ref{eq-adcu}) by multiplying both sides of (\ref{eq-adsq}) by $A_d$ and using (\ref{eq-adsq}) again to derive the desired form.
It is clear that the valency $k_i$ of $R_i$ is equal to that of $S_i$, for $1 \le i \le d-1$, and (\ref{eq-kiad}) holds.
Obviously, $\{R_1, R_2, \dots, R_{d-1}\}$ is a closed
subset, and the Bose-Mesner subalgebra with basis
$\{A_0, A_1, \dots, A_{d-1}\}$
is exactly isomorphic to the Bose-Mesner algebra of $\mc{Y}$.
\end{proof}

From (\ref{eq-adcu}), we have the following.
\begin{thm}
Let $\mc{X} = \mc{Y} \wr \mc{K}_n$ be the wreath product of association schemes $\mc{Y}$
 and $\mc{K}_n$. Let the adjacency matrices of $\mc{X}$ are ordered such a way that $A_d=(J_n-I_n)\otimes J_m$ where $m=|Y|$. Then the $A_d$ can be viewed as the incidence matrix of a symmetric partial geometric design with parameters
$(mn, m(n-1); m^2(n^2-3n+2), m^2(n^2-3n+3))$.
\end{thm}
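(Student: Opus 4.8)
The plan is to verify that $N := A_d$ satisfies the three matrix conditions in (\ref{eq2}) that characterize the point-block incidence matrix of a symmetric partial geometric design, and to read off $\alpha$ and $\beta$ directly from the cubic identity already supplied by Lemma \ref{lem-wreath}. First I would fix the incidence structure: interpreting the $mn$ vertices of the relation graph $(X, R_d)$ as simultaneously the point set and the block set, with the block attached to a vertex $v$ being its $R_d$-neighborhood $N_v = \{x \colon (x,v)\in R_d\}$, the matrix $A_d$ is exactly the incidence matrix of the design $(X, \mc{N})$ in the sense set up in the Preliminaries. Since $\mc{X}$ is a symmetric association scheme, $A_d^{T} = A_d$, so $N^{T} = N$ and therefore $NN^{T}N = A_d^3$.

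Next I would check the tactical-configuration conditions $JN = kJ$ and $NJ = rJ$. Because $A_d = (J_n - I_n)\otimes J_m$ is the adjacency matrix of a regular graph, every row sum and every column sum equals the valency $k_d$, so these two conditions hold automatically with $k = r = k_d$. Computing $k_d$ from the Kronecker factorization is immediate: each row of $J_n - I_n$ has $n-1$ ones and each row of $J_m$ has $m$ ones, so $k_d = m(n-1)$. This gives $v = b = mn$ and $k = r = m(n-1)$; in particular the design is a $1$-design, and since $v = b$ it is symmetric.

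Finally, the partial geometric property $NN^{T}N = \beta N + \alpha(J - N)$ is, under $N^{T} = N$, precisely the cubic identity (\ref{eq-adcu}) of Lemma \ref{lem-wreath},
\[
A_d^3 = m^2(n^2-3n+2)(J_{mn} - A_d) + m^2(n^2-3n+3)A_d,
\]
which yields $\alpha = m^2(n^2-3n+2)$ and $\beta = m^2(n^2-3n+3)$, matching the asserted parameter set $(mn, m(n-1); m^2(n^2-3n+2), m^2(n^2-3n+3))$.

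I do not expect a genuine obstacle here: the entire weight of the argument rests on identity (\ref{eq-adcu}), which is already proved in the preceding lemma, so the only computation left is the valency of $A_d$, and that is immediate from the Kronecker structure. The one point deserving a sentence of care is the equivalence, for this particular incidence structure, between the abstract matrix identity (\ref{eq2}) and the combinatorial partial geometric condition on $(X, \mc{N})$; this equivalence is exactly what is recorded in the Preliminaries, so invoking it completes the proof.
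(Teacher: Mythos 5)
Your proposal is correct, and it is essentially the argument the paper itself signals: the sentence immediately preceding the theorem reads ``From (\ref{eq-adcu}), we have the following,'' and your proof is a careful unpacking of exactly that — identifying $A_d$ as the incidence matrix of the neighborhood design, computing the valency $m(n-1)$ from the Kronecker factorization, and reading $\alpha$ and $\beta$ off the cubic identity of Lemma \ref{lem-wreath}. The paper's formal proof, by contrast, is a one-liner that instead cites the fact that $(X,R_d)$ is a complete multipartite strongly regular graph, thereby routing through Section \ref{sec-srgs} (Lemma \ref{srg-dsgn} with $k=\mu$ and the corollary on $\overline{cK_n}$, applied with $n$ parts of size $m$). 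The two routes are equivalent in substance, since the Section \ref{sec-srgs} corollary is itself proved by deriving the same cube identity; yours is more self-contained within Section \ref{sec-wrth}, the paper's more economical. One small caveat: you say the equivalence between the matrix identities (\ref{eq2}) and the combinatorial partial geometric condition ``is exactly what is recorded in the Preliminaries,'' but the Preliminaries state only the forward implication (incidence matrix of a partial geometric design satisfies (\ref{eq2})). The converse — which is what you actually need — is standard and is used implicitly throughout the paper (e.g., in Theorem \ref{thm-z}): for a $\{0,1\}$-matrix $N$ with constant row and column sums, the $(x,B)$ entry of $NN^{T}N$ counts precisely the incident pairs $(y,C)$ with $y\in B\cap C$ and $x\in C$, so the identity is a literal restatement of the defining condition. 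A sentence to that effect would close the only gap in your write-up.
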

\begin{proof}
It immediately follows from the fact that the relation graph $(X, R_d)$ is a multipartite strongly regular graph.
\end{proof}

\vskip 0.5cm

\section{Certain three-class self-dual association schemes of order $3m^2$}\label{sec-assc}
In this section we show that the relation graphs of certain three-class association schemes give rise to partial geometric designs.

In order to represent the parameters of an association scheme in a compact form, we recall the definition of its character table.
Let $\x =$ \xr\ be a symmetric association scheme of order $|X|=n$ with adjacency matrices $A_0, A_1, \dots, A_d$.
Let $E_0=\frac1n J, E_1, \dots, E_d$ denote the primitive idempotents in $\mc{A}(\mc{X})$. Then there are $p_j(i), q_i(j)\in \mbb{C}$ for all  $i, j\in \{0, 1, \dots, d\}$ such that
\[A_j=\sum\limits_{i=0}^d p_j(i)E_i \quad \mbox{and}\quad
E_i=\frac{1}{n} \sum\limits_{j=0}^d q_i(j) A_j.\]
The $(d+1)\times (d+1)$ matrices $P$ and $Q$ whose $(i,j)$-entries are defined by \[P_{ij}=p_j(i)  \ \mbox{ and }\ Q_{ij}=q_j(i)\] are called the \ti{1st eigenmatrix} and \ti{2nd eigenmatrix} of $\mc{X}$, respectively. The first eigenmatrix is often called the \ti{character table} of the association scheme.
We note that $PQ=nI$.
An association scheme is said to be (\ti{formally}) \ti{self-dual} if $P=Q$.
Next, note that if none of the relation graphs of a symmetric three-class association scheme are strongly regular, then every relation graph has four distinct eigenvalues \cite{va}.

\vhs

\begin{lem}\label{lem-z} Let $\mc{Z}$ be a three-class symmetric association scheme of order $3m^2$ for some positive integer $m\equiv 0\pmod{3}$. Then the following two statements are equivalent.
\begin{enumerate}
\item[(1)] The character table $P$ of $\mc{Z}$ is given by
\[P =\left [ \begin{array}{cccc}
1 & m(m-1) & m(m+1)& (m-1)(m+1)\\
1 &  m   &  0&         -m-1  \\
1 & 0   & -m  &  m-1 \\
1 & -m&  m & -1      \\
 \end{array}\right ]. \]
\item[(2)] $\mc{Z}$ is self-dual and its adjacency matrices $A_i$ satisfy the following identities:
\[A_1^3=m^2A_1+\frac13 m^2(m-1)(m-2)J\]
\[A_2^3=m^2A_2+\frac13 m^2(m+1)(m+2)J\]
\[(A_3+I)^3=m^2(A_3+I)+\frac13 m^2(m-1)(m+1)J.\]
\end{enumerate}
\end{lem}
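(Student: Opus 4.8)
The plan is to translate both statements into conditions on the eigenvalues $p_j(i)$ recorded in the first eigenmatrix $P$, exploiting the three identities $A_j=\sum_i p_j(i)E_i$, $J=nE_0$ with $n=3m^2$, and $E_iE_{i'}=\delta_{ii'}E_i$. Since $f(A_j)=\sum_i f(p_j(i))E_i$ for every polynomial $f$, each cubic identity is equivalent, one primitive idempotent at a time, to a numerical identity among the eigenvalues; this is the common engine for both directions.

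For $(1)\Rightarrow(2)$ I would first verify self-duality by the single matrix computation $P^2=3m^2I$: because $PQ=nI$ and $P$ is invertible, this forces $Q=P$. Then, setting $c_1=\frac13 m^2(m-1)(m-2)$, the identity $A_1^3=m^2A_1+c_1J$ is equivalent to $p_1(i)^3=m^2p_1(i)$ for $i\neq0$ together with $p_1(0)^3=m^2p_1(0)+c_1n$; reading the $A_1$-column $(m(m-1),m,0,-m)$ of $P$ confirms all four equations simultaneously, and the $A_2$-column and the $A_3+I$-column (add $1$ to each entry of the $A_3$-column) are checked in the same way.

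The real work is in $(2)\Rightarrow(1)$. Evaluating each cubic identity on $E_0$ (so $J\mapsto n$) shows that the valencies $k_j=p_j(0)$ are roots of explicit monic cubics; each cubic splits off the expected linear factor, leaving a quadratic whose discriminant is negative for $m\ge3$. Hence the valency is the unique real root, giving $k_1=m(m-1)$, $k_2=m(m+1)$, $k_3=m^2-1$, that is, the first row of $P$; here the hypothesis $m\equiv0\pmod{3}$ enters only through $m\ge3$. Evaluating the same identities on $E_i$ for $i\ge1$ (so $J\mapsto0$) yields $p_1(i),p_2(i)\in\{0,\pm m\}$ and $p_3(i)\in\{-1,m-1,-m-1\}$, and reading $A_0+A_1+A_2+A_3=J$ on $E_i$ gives the constraint $p_1(i)+p_2(i)+p_3(i)=-1$. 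Together these cut each nontrivial row down to a short explicit list of candidates.

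To finish I would pin the three nontrivial rows down through their multiplicities. For each candidate row the orthogonality relation $\sum_j p_j(i)^2/k_j=n/m_i$ forces a definite value of $m_i$, while self-duality forces $m_i=k_i$; so only candidates whose forced multiplicity is actually one of $m(m-1),m(m+1),m^2-1$ can occur. This discards all but four candidates, identifies the rows of multiplicity $m(m-1)$ and $m(m+1)$ uniquely, and leaves a single sign ambiguity for the row of multiplicity $m^2-1$. One column-orthogonality check, $\sum_i m_i p_1(i)p_3(i)=0$, eliminates the wrong sign and reproduces exactly the displayed $P$. I expect this final stage---removing the spurious eigenvalue rows and settling the last ambiguity by combining the orthogonality relations with self-duality---to be the main obstacle, the cubic-root and eigenvalue-reading steps being essentially routine.
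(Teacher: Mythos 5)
Your proposal is correct, and it is not the paper's argument in one important place: the forward direction. For $(1)\Rightarrow(2)$ the paper never touches the primitive idempotents; instead it computes all intersection numbers from $P$ via $p_{ij}^h=\frac{1}{n k_h}\sum_{\nu}p_i(\nu)p_j(\nu)p_h(\nu)k_{\nu}$, writes out the three intersection matrices $B_1,B_2,B_3$ explicitly, verifies the cubic identities for $B_1$, $B_2$ and $B_3+B_0$ inside the intersection algebra, and then transfers them to the $A_i$ through the algebra isomorphism $A_i\mapsto B_i$ between the Bose--Mesner algebra and the intersection algebra. Your spectral route---$A_j=\sum_i p_j(i)E_i$, $J=nE_0$, so each matrix identity is equivalent to the four scalar equations $p_j(i)^3=m^2p_j(i)+\delta_{i0}c_jn$ read off a column of $P$---reaches the same conclusion with far less computation; what it gives up is the explicit list of intersection matrices, which the paper records and implicitly reuses later (e.g.\ when matching $\mathcal{Z}$ against the KSD-schemes of Theorem 8). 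For $(2)\Rightarrow(1)$ your skeleton coincides with the paper's: valencies from multiplying by the all-ones vector, the eigenvalue constraint $\{0,\pm m\}$ (and $\{-1,m-1,-m-1\}$ for $A_3$) from the fact that $J$ has spectrum $\{3m^2,0\}$, self-duality to force $m_i=k_i$, and orthogonality to arrange the rows. But you supply details the paper only gestures at: the paper asserts $k_1=m(m-1)$ directly from the cubic and says the rows ``can be arranged'' by row- and column-orthogonality, whereas you prove uniqueness of the real root via the negative discriminant (using $m\ge 3$), enumerate the candidate rows compatible with the row-sum constraint $1+p_1(i)+p_2(i)+p_3(i)=0$, discard the spurious ones because their forced multiplicities $n\big/\sum_j p_j(i)^2/k_j$ are not valencies, and kill the residual sign ambiguity between $(1,m,-m,-1)$ and $(1,-m,m,-1)$ with the single check $\sum_i m_ip_1(i)p_3(i)=0$. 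So your writeup is both a shortcut (forward direction) and a completion (converse direction) of the published proof.
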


\vhs

\begin{proof} First, it is straightforward to verify that $P^2=3m^2I$, and so $\mc{Z}$ is self-dual.
We can also calculate all the intersection numbers of $\mc{Z}$ directly from the character table by using the basic identity:
\[p^h_{ij}=\frac{1}{n\cdot k_h}\sum\limits_{\nu=0}^3 p_i(\nu)p_j(\nu)p_h(\nu)k_{\nu}\] for $h,i,j\in \{0,1,2,3\}$. Namely, the intersection matrices are given by:
\[B_1 =\left [ \begin{array}{cccc}
0 & 1 & 0& 0\\
m(m-1) & \frac13 m(m-2)& \frac13 m(m-1) & \frac13 m^2-m\\
0 & \frac13 m(m+1) & \frac13 m(m-1)& \frac13 m^2 \\
0 & \frac13 m(m-2) -1 & \frac13 m(m-1)& \frac13 m^2 \\
\end{array}\right ] \]

\[B_2 =\left [ \begin{array}{cccc}
0 &    0 & 1 &        0 \\
0 &    \frac13 m(m+1) & \frac13 m(m-1) & \frac13 m^2 \\
m(m+1)&\frac13 m(m+1) & \frac13 m(m+2)& \frac13 m^2+m \\
0 &   \frac13 m(m+1) & \frac13 m(m+2) -1& \frac13 m^2  \\
\end{array}\right ] \]

\[B_3 =\left [ \begin{array}{cccc}
0 & 0 & 0 & 1\\
0 & \frac13 m(m-2) -1 & \frac13 m(m-1)& \frac13 m^2\\
0 & \frac13 m(m+1)& \frac13 m(m+2) -1& \frac13 m^2 \\
(m-1)(m+1) & \frac13 m(m+1)& \frac13 m(m-1)& \frac13 m^2 -2\\
\end{array}\right ]. \]

Second, by applying the basic identity
\[B_iB_j=\sum\limits_{h=0}^3p^h_{ij}B_h,\] we obtain
\[B_i^3=B_i^2B_i=p^0_{ii}B_0B_i+p^1_{ii}B_1B_i+p^2_{ii}B_2B_i+p^3_{ii}B_3B_i\] for $i=1, 2, 3$, in the intersection algebra of $\mc{Z}$. Note that $B_0B_i=B_i$. By plugging the values of $p^h_{ij}$ in the second identity and using the first identity repeatedly, we obtain the following identities:
\[B_1^3=m^2B_1+\frac13 m^2(m-1)(m-2)(B_0+B_1+B_2+B_3)\]
\[B_2^3=m^2B_2+\frac13 m^2(m+1)(m+2)(B_0+B_1+B_2+B_3)\]
\[(B_3+B_0)^3=m^2(B_3+B_0)+\frac13 m^2(m-1)(m+1)(B_0+B_1+B_2+B_3)\]

Finally, we see that the desired identities are deduced from these identities by the isomorphism between the Bose-Mesner algebra $\langle A_0, A_1, A_2, A_3\rangle$ and intersection algebra $\langle B_0, B_1, B_2, B_3\rangle$. Thus (1) implies (2).

Conversely, if we multiply both sides of each identity in (2) by the all-ones vector $\mathbf{j}$, then for instance, from the first identity, we have
\[A_1^3\mathbf{j}=m^2A_1\mathbf{j}+\frac13 m^2(m-1)(m-2)J\mathbf{j}\]
or equivalently,
\[k_1^3\mathbf{j}=m^2k_1\mathbf{j}+m^4(m-1)(m-2)\mathbf{j}.\]
That is, we have \[k_1^3=m^2k_1+m^4(m-1)(m-2);\] and so, $k_1=m(m-1)$. Similarly, we find $k_2=m(m+1)$ and $k_3+1=m^2$.
Furthermore, since the all-ones matrix $J$ has only one non-zero eigenvalue (which is $3m^2$) and the rest of them are zeros, from the identity $A_1^3-m^2A_1=\frac13 m^2(m-1)(m-2)J$, we see that the remaining possible eigenvalues for $A_1$ are $0, m, -m$. This is also true for the cases of $A_2$ and $A_3+I$. Also we know that by the self-duality of the association scheme, possible multiplicicties for these eigenvalues are $k_1, k_2$, and $k_3$.
Now by using the row- and column-orthogonality of the character table, we can arrange the eigenvalues of $A_1, A_2$ and $A_3$ to obtain $P$. This completes the proof.    \end{proof}

\vhs
As an immediate consequence of this lemma, we have the following.
\begin{thm} \label{thm-z} Let $\mc{Z}$ be a three-class association scheme, and let $A_0, A_1, A_2, A_3$ be its adjacency matrices. Suppose that the character table $P$ of $\mc{Z}$ is given by
\[P =\left [ \begin{array}{cccc}
1 & m(m-1) & m(m+1)& (m-1)(m+1)\\
1 &  m   &  0&         -m-1  \\
1 & 0   & -m  &  m-1 \\
1 & -m&  m & -1      \\
 \end{array}\right ]. \]
Then $\mc{Z}$ gives rise to three symmetric partial geometric designs
coming from the incidence matrices $A_1, A_2 \mbox{ and } A_3+A_0$. In this case,
the parameters $(v, k; \alpha, \beta)$ of corresponding partial geometric designs are given by
\[(3m^2,\ m(m-1);\ \frac13 m^2(m^2-3m+2),\ \frac13 m^2(m^2-3m+5)),\]
\[(3m^2,\ m(m+1);\ \frac13 m^2(m^2+3m+2),\ \frac13 m^2(m^2+3m+5)),\]
\[(3m^2,\ m^2;\ \frac13 m^2(m^2-1),\ \frac13 m^2(m^2+2)).\]
\end{thm}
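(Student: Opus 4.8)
The plan is to deduce the theorem directly from Lemma \ref{lem-z}, which already carries all the substantive work. The hypothesis on the character table $P$ is verbatim condition (1) of that lemma, so by the equivalence (1) $\Leftrightarrow$ (2) I immediately obtain that $\mc{Z}$ is self-dual and that its adjacency matrices satisfy the three cubic identities of Lemma \ref{lem-z}(2), recalling that $A_0 = I$. I would also remark in passing that the extra hypothesis $m \equiv 0 \pmod 3$ required by the lemma is automatic here: the entries of the intersection matrices $B_1, B_2, B_3$ must be integers, and several of them equal $\frac13 m^2$ and $\frac13 m(m \pm 1)$, which forces $3 \mid m$.

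The key simplifying observation is that $\mc{Z}$ is a \emph{symmetric} association scheme, so $A_i^T = A_i$ for every $i$, and hence $A_3 + A_0$ is symmetric as well. Therefore, for each candidate incidence matrix $N \in \{A_1, A_2, A_3 + A_0\}$ one has $NN^TN = N^3$, and the partial-geometric condition $NN^TN = \beta N + \alpha(J - N)$ from (\ref{eq2}) collapses to the cubic identity $N^3 = \beta N + \alpha(J - N) = (\beta - \alpha)N + \alpha J$. Comparing this with each identity of Lemma \ref{lem-z}(2), I would read off $\alpha$ as the coefficient of $J$ and $\beta = \alpha + m^2$; a one-line computation then produces exactly the three pairs $(\alpha, \beta)$ asserted in the statement, for instance $\alpha = \frac13 m^2(m^2 - 3m + 2)$ and $\beta = \alpha + m^2 = \frac13 m^2(m^2 - 3m + 5)$ for $N = A_1$, and analogously for $A_2$ and $A_3 + A_0$.

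It remains to supply the $1$-design data and confirm symmetry. Here I would check that each $N$ is a genuine $0$-$1$ matrix with constant row and column sums: $A_1$ and $A_2$ are adjacency matrices of $\mc{Z}$, while $A_3 + A_0$ is $0$-$1$ precisely because the relations $R_0$ and $R_3$ are disjoint (so $A_3$ has zero diagonal and $A_0 = I$ fills it in). Using the valencies $k_1 = m(m-1)$, $k_2 = m(m+1)$, $k_3 = m^2 - 1$ already computed inside the proof of Lemma \ref{lem-z}, the row/column sums are $m(m-1)$, $m(m+1)$ and $k_3 + 1 = m^2$ respectively, giving $JN = kJ = NJ$; and since $v = b = 3m^2$ in every case, $k = r$, so each design is symmetric and the abbreviated notation $(v, k; \alpha, \beta)$ is justified.

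I do not anticipate any genuine obstacle: once Lemma \ref{lem-z} is in hand, the theorem is pure bookkeeping, which is why the excerpt advertises it as an immediate consequence. The only points I would state with care are the structural ones that license the whole reduction, namely the symmetry of the scheme (which gives $NN^TN = N^3$) and the disjointness of $R_0$ and $R_3$ (which makes $A_3 + A_0$ a legitimate incidence matrix); everything else is a routine matching of constants against the cubic identities.
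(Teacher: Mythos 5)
Your proposal is correct and takes essentially the same route as the paper, which presents the theorem as an immediate consequence of Lemma \ref{lem-z}: the cubic identities of part (2), combined with the symmetry of the scheme (so $NN^TN=N^3$) and the valencies $m(m-1)$, $m(m+1)$, $m^2-1$ read off the first row of $P$, yield exactly the stated parameter triples. Your extra care about the hypothesis $m\equiv 0\pmod 3$ (forced by integrality of intersection numbers such as $\frac13 m^2$) and about $A_3+A_0$ being a $0$-$1$ matrix simply makes explicit what the paper leaves implicit.
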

\vhs
\begin{rem}\label{rem-z} Having a character table of an association scheme is equivalent to having the intersection matrices since the character table essentially generates all intersection numbers and vice versa. However, it is possible for more than one association scheme to have the same character table, and thus, the same intersection numbers. For example, it is well-known that the two strongly regular graphs with the same parameters $(16, 6, 2, 2)$ discussed in Remark \ref{rem-srg}  in Section \ref{sec-srgs}, are relation graphs of two distinct two-class association schemes.
\end{rem}

\section{The association schemes from codes and orthogonal arrays} \label{sec-conc}
In this section, we give concrete examples of the three-class association schemes described in the previous section. Our examples come as fusion schemes of the Hamming association scheme $H(d, 3)$ and are also obtained from a family of linear orthogonal arrays of strength two. As a result, we see an interesting link between the three-class association schemes and linear orthogonal arrays.

\subsection{From Hamming schemes}
Here we show that a three-class fusion scheme of $H(d, 3)$ for each odd $d\ge 3$ gives rise to partial geometric designs.

First we recall the definition of the Hamming scheme $H(d,q)$. Let $S$ be a $q$-element set and let \[V:=S^d=\{(x_1, x_2,\dots, x_d): x_j\in S, \ j=1, 2, \dots, d\}.\]
Define the association relation between any $\flf{x}=(x_1, x_2, \dots, x_d)$ and $\flf{y}=(y_1, y_2,\dots, y_d)\in V$ according to the Hamming distance \[\delta(\flf{x}, \flf{y}):=|\{j\in \{1, 2,\dots, d\}: x_j\neq y_j\}|;\] that is, define \[(\flf{x},\flf{y})\in R_i\ \Leftrightarrow \ \delta(\flf{x}, \flf{y})=i.\]
Then
$(V, \{R_i\}_{0\le i\le d})$ is an association scheme called the $d$-class \ti{Hamming scheme}, over $S$, denoted by $H(d, q)$.

First, we show that among the Hamming schemes of class three, $H(3,3)$ is the only Hamming scheme that gives rise to partial geometric designs.
\begin{prop} \label{ExH(3,3)} The relation graphs of the Hamming scheme $H(3,q)$ give rise to partial geometric designs if and only if $q=3$.
\end{prop}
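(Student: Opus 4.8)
The plan is to reduce the phrase ``gives rise to a partial geometric design'' to a spectral condition on the scheme and then to read that condition off from the Krawtchouk eigenvalues of $H(3,q)$. First I would record the working criterion. If $N$ is a symmetric $\{0,1\}$-matrix lying in the Bose--Mesner algebra of a symmetric scheme on $n$ points and having constant line sums $k$ --- this covers each adjacency matrix $A_i$ as well as the reflexive matrix $A_3+A_0$ that appears in Theorem \ref{thm-z} --- then by \eqref{eq2} together with $N^T=N$, the matrix $N$ is the incidence matrix of a partial geometric design precisely when $N^3=\beta N+\alpha(J-N)$ for some constants. Writing $N=\sum_{j}\theta_j E_j$ in terms of the primitive idempotents $E_j$ (so $\theta_0=k$ and $J=nE_0$) and comparing idempotent components, this identity collapses to $\theta_j^{3}=(\beta-\alpha)\theta_j$ for every $j\ge 1$. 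Hence the criterion is simply: $N$ yields a partial geometric design if and only if all of its nonzero non-principal eigenvalues share one absolute value $\sqrt{\beta-\alpha}$ (the $E_0$-component then fixes $\alpha,\beta$).

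Second, I would compute the spectrum of $H(3,q)$. Using the standard Hamming-graph eigenvalues $3(q-1)-qj$ for $R_1$ and the Krawtchouk values for $R_2,R_3$, the non-principal eigenvalues are $\{2q-3,\,q-3,\,-3\}$ for $A_1$, $\{(q-1)(q-3),\,3-2q,\,3\}$ for $A_2$, and $\{-(q-1)^2,\,q-1,\,-1\}$ for $A_3$, so that $A_3+A_0$ has non-principal eigenvalues $\{1-(q-1)^2,\,q,\,0\}$. I would display these together as the character table $P$ of $H(3,q)$ so it can be compared entry-by-entry with the table in Lemma \ref{lem-z}.

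Third, I would settle both directions. For $q=3$ the table specializes to exactly the character table of Lemma \ref{lem-z} with $m=3$ (indeed $|V|=27=3m^2$), so $H(3,3)$ is the self-dual order-$27$ scheme treated there, and Theorem \ref{thm-z} immediately produces the three partial geometric designs coming from $A_1$, $A_2$ and $A_3+A_0$. For the converse I would apply the spectral criterion to $A_1$: its non-principal spectrum always contains $-3\ne 0$, which forces the common absolute value to be $3$; but $(2q-3)^2=9$ has no integer solution $q\ge 2$ other than $q=3$, and $2q-3=0$ is impossible, so $A_1$ fails to give a design whenever $q\ne 3$. The identical forcing works for $A_2$ through its nonzero eigenvalue $3$ (again $(3-2q)^2=9$ gives $q=3$), so already either relation graph rules out every $q\ne 3$.

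The step I expect to require the most care is pinning down the precise meaning of ``the relation graphs give rise to partial geometric designs.'' The eigenvalues show that $A_3$ by itself never gives a design at $q=3$ (its spectrum $\{-(q-1)^2,q-1,-1\}=\{-4,2,-1\}$ has three distinct absolute values), while $A_3+A_0$ does; moreover $A_3+A_0$ also meets the criterion at $q=2$, where its only nonzero non-principal eigenvalue is $q=2$. Thus the statement has to be read in the sense of Theorem \ref{thm-z}, with the designs arising from $A_1,A_2,A_3+A_0$, and the clean ``only if'' must be anchored on $A_1$ or $A_2$ rather than on $A_3+A_0$. Beyond this interpretive point, the only real risk is a careless sign or a column permutation in the Krawtchouk computation, which is why I would verify the $q=3$ table against Lemma \ref{lem-z} after fixing the ordering of the idempotents.
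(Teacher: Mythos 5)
Your proof is correct, but it takes a genuinely different route from the paper's. The paper works entirely in the intersection algebra: it computes $B_1^3$ explicitly as a linear combination of $B_0,B_1,B_2,B_3$, observes that the coefficients of $B_0$, $B_2$, $B_3$ (namely $3(q^2-3q+2)$, $6(q-2)$, $6$) coincide exactly when $q=3$, and then transfers the resulting identities to the $A_i$ via the isomorphism between the intersection algebra and the Bose--Mesner algebra, anchoring the ``only if'' solely on $A_1$. You instead derive a clean spectral criterion --- a symmetric matrix $N$ in the Bose--Mesner algebra with constant line sums satisfies $N^3=\beta N+\alpha(J-N)$ iff all its nonzero non-principal eigenvalues share a common absolute value (immediate from comparing idempotent components, since $J=nE_0$ and the $E_j$ are linearly independent) --- and then read everything off the character table of $H(3,q)$. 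Your route buys several things the paper's does not: the ``only if'' comes out stronger (both $A_1$ and $A_2$ individually fail for every $q\neq 3$, whereas the paper only rules out $A_1$); the verification at $q=3$ is uniform for $A_1$, $A_2$, $A_3+A_0$ rather than three separate cube computations; and you expose the genuine interpretive subtlety that $A_3$ alone never works (even at $q=3$) while $A_3+A_0$ also satisfies the identity at $q=2$, so the proposition must indeed be read, as you say, in the sense of Theorem \ref{thm-z} with the ``only if'' anchored on $A_1$ or $A_2$ --- a point the paper's proof implicitly assumes but never addresses. The paper's approach, in exchange, is more elementary (no idempotent decomposition needed) and produces the explicit constants $\alpha,\beta$ directly from the intersection numbers; in your argument they are recovered afterwards from the $E_0$-component, which is equally effective but one step less immediate.
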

\begin{proof} The eigenmatrices and the first intersection matrix of $H(3,q)$ are given by
\[P=Q =\left [ \begin{array}{cccc}
1 & 3(q-1) & 3(q-1)^2 & (q-1)^3\\
1 & 2q-3 & (q-1)(q-3) & -(q-1)^2\\
1 & q-3 & -2q+3 & q-1\\
1 & -3 & 3 & -1\\
\end{array}\right ] \]
\[B_1=\left [ \begin{array}{cccc}
0 & 1 & 0 & 0\\
3(q-1) & q-2 & 2 & 0\\
0 & 2(q-1) & 2(q-2) & 3 \\
0 & 0 & q-1 & 3(q-2)\\ \end{array}\right ] \]

By direct calculation, it is shown that the intersection matrices satisfy the identity:
\[B_1^3=3(q^2-3q+2)I + (q^2+3q-3)B_1 +6(q-2)B_2+6B_3.\]
By the algebra isomorphism between the Bose-Mesner algebra and the intersection algebra, it then follows that \[A_1^3=3(q^2-3q+2)I + (q^2+3q-3)A_1 +6(q-2)A_2+6A_3.\]
Therefore, when $q=3$, we have
\[A_1^3=15A_1+6(J-A_1).\]
However, for $H(3, q)$ with $q\neq 3$, there is no way that we can express $A_1^3$ as a linear combination of $A_1$ and $J-A_1$.
Therefore, we see that the first relation graph of $H(3,q)$ gives rise to a partial geometric design if and only if $q=3$. By a similar calculation, we can verify that when $q=3$, \[B_2^3=69B_2+60(B_0+B_1+B_3), \quad (B_3+B_0)^3=33(B_3+B_0)+24(B_1+B_2),\]  or equivalently,
 \[A_2^3=69A_2+60(J-A_2), \quad (A_3+I)^3=33(A_3+I)+24(J-A_3-I).\] This completes the proof.
\end{proof}

We note that $H(3,3)$ has the same parameters as the three-class association scheme described in Lemma \ref{lem-z} with $m=3$.
Although there is no other Hamming scheme whose relation graphs give rise to partial geometric designs, there exists a three-class fusion scheme of $H(d,3)$, for each odd $d\ge 3$, whose relation graphs give rise to partial geometric designs. Kageyama, Saha and Das in \cite[Theorem 2]{KSD} introduced the following three-class fusion scheme $\mc{F}$ of $H(d, 3)$ which will be called the \ti{KSD-scheme} in what follows.


\begin{thm} \label{sch-f}\cite{KSD} Consider Hamming scheme $H(d,3)=(V, \{R_i\}_{0\le i\le d})$ with $d=2l+1$ for  $l\ge 1$, and let $S_0=R_0$ and
\[S_j=\bigcup\limits_{i=0}^{[(d-j)/3]}R_{3i+j}, \mbox{ for } j=1, 2, 3\]
where $[(d-j)/3]$ denotes the greatest integer less than or equal to $(d-j)/3$.
Then $\mc{F}=(V, \{S_0, S_1, S_2, S_3\})$ is a three-class association scheme with the following   intersection matrices:
\small{\[B_1 =\left [ \begin{array}{cccc}
0      &                   1 &                0&               0\\
3^{2l}+(-1)^l3^l & 3^{2l-1}+2(-1)^l 3^{l-1} & 3^{2l-1}+(-1)^l3^{l-1} & 3^{2l-1}+(-1)^l3^l\\
0 &               3^{2l-1}-(-1)^{l}3^{l-1}  & 3^{2l-1}+(-1)^{l}3^{l-1} & 3^{2l-1}\\
0 &              3^{2l-1}+2(-1)^l3^{l-1}-1 & 3^{2l-1}+(-1)^l3^{l-1} & 3^{2l-1}\\
\end{array}\right ] \]

\[B_2 =\left [ \begin{array}{cccc}
0            & 0                     & 1             & 0\\
0 &               3^{2l-1}-(-1)^{l}3^{l-1}  & 3^{2l-1}+(-1)^{l}3^{l-1} & 3^{2l-1}\\
3^{2l}-(-1)^{l}3^l & 3^{2l-1}-(-1)^{l}3^{l-1}& 3^{2l-1}-2(-1)^{l}3^{l-1}& 3^{2l-1}-(-1)^{l}3^l\\
0 & 3^{2l-1}-(-1)^{l}3^{l-1}& 3^{2l-1}-2(-1)^{l}3^{l-1}-1 & 3^{2l-1}\\
\end{array}\right ] \]

\[B_3 =\left [ \begin{array}{cccc}
0            & 0                     & 0            & 1\\
0 &              3^{2l-1}+2(-1)^l3^{l-1}-1 & 3^{2l-1}+(-1)^l3^{l-1} & 3^{2l-1}\\
0 & 3^{2l-1}-(-1)^{l}3^{l-1}& 3^{2l-1}-2(-1)^{l}3^{l-1}-1 & 3^{2l-1}\\
3^{2l}-1 & 3^{2l-1}-(-1)^{l}3^{l-1} & 3^{2l-1}+(-1)^l3^{l-1} & 3^{2l-1}-2\\
\end{array}\right ]. \]} \end{thm}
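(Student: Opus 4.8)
The plan is to realize $\mc{F}$ as a fusion of the classes of the self-dual Hamming scheme $H(d,3)=(V,\{R_i\})$ and to verify admissibility by the Bannai--Muzychuk criterion; the displayed matrices $B_1,B_2,B_3$ then follow from the merged character table by exactly the computation carried out in Lemma \ref{lem-z}. The nontrivial relations $R_1,\dots,R_d$ are merged according to the residue of the distance modulo $3$, with $R_0=I$ kept separate as $S_0$. Recall the classical fact that the eigenvalue of $R_i$ on the eigenspace indexed by Hamming weight $\nu$ is the Krawtchouk number $K_i(\nu)$, collected by the identity $\sum_{i=0}^{d}K_i(\nu)z^i=(1+2z)^{d-\nu}(1-z)^{\nu}$. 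Since $S_j=\bigcup_{i\equiv j\ (\mathrm{mod}\ 3)}R_i$, the eigenvalue of $S_j$ on the $\nu$-th eigenspace is $\theta_j(\nu)=\sum_{i\equiv j}K_i(\nu)$, and I would isolate this residue-class sum with the roots-of-unity filter $\theta_j(\nu)=\tfrac13\sum_{r=0}^{2}\omega^{-rj}(1+2\omega^{r})^{d-\nu}(1-\omega^{r})^{\nu}$, where $\omega=e^{2\pi i/3}$.

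The crux is evaluating this filter. For $\nu=0$ only the $r=0$ term survives; applying the same filter to $(1+2z)^{d}$ yields the valencies $k_1=3^{2l}+(-1)^{l}3^{l}$, $k_2=3^{2l}-(-1)^{l}3^{l}$ and $k_3=3^{2l}-1$, which are precisely the first columns of $B_1,B_2,B_3$. For $\nu\ge 1$ the $r=0$ term vanishes, and using $1+2\omega=i\sqrt3$ and $1-\omega=\sqrt3\,e^{-i\pi/6}$ the two remaining summands are complex conjugates, so $\theta_j(\nu)=\tfrac23\,3^{d/2}\cos\!\bigl(\tfrac{\pi(3d-4\nu-4j)}{6}\bigr)$. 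Replacing $\nu$ by $\nu+3$ shifts the cosine argument by $2\pi$, so $\theta_j(\nu)$ depends only on $\nu \bmod 3$; concretely the filtered sums take the values $0$ and $\pm(-1)^{l}3^{l}$. Taking the dual partition $\Delta_0=\{0\}$ together with the three weight-classes $\{\nu\ge 1:\nu\equiv 1,2,0\pmod 3\}$, this says exactly that every block of the eigenmatrix of $H(d,3)$ has constant row sum, which is the Bannai--Muzychuk condition; hence $\mc{F}$ is a (self-dual) three-class association scheme. The hypothesis $d=2l+1$ enters decisively here: the odd parity makes $(1+2\omega)^{d}=(-1)^{l}i\,3^{d/2}$ purely imaginary, which is what forces the surviving terms to be conjugate and collapses the $3^{d/2}$-contributions into the clean corrections carrying the factor $(-1)^{l}$.

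With the resulting $4\times4$ merged character table in hand, I would recover the intersection numbers from the identity $p^{h}_{ij}=\frac{1}{n\,k_h}\sum_{\nu}p_i(\nu)p_j(\nu)p_h(\nu)\,k_{\nu}$ used in Lemma \ref{lem-z}, inserting the three distinct values of each $\theta_j$ and the multiplicities (equal to the valencies by self-duality). For instance this reproduces $p^{1}_{11}=3^{2l-1}+2(-1)^{l}3^{l-1}$ after the factorization $a^2+3\varepsilon a+2=(a+\varepsilon)(a+2\varepsilon)$ with $a=3^{l}$, $\varepsilon=(-1)^{l}$, and the remaining entries of $B_1,B_2,B_3$ come out the same way; one has only to keep track that $S_0=R_0$ is split off from the weight-divisible-by-three relations, which is what produces the $-1$ and $-2$ shifts visible in the tables. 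The main obstacle is the eigenvalue computation of the second paragraph, namely getting the roots-of-unity filter and the odd-$d$ phase bookkeeping exactly right and correctly matching each relation class $S_j$ to its dual weight-class so that the rows and columns of the character table are orthogonal, just as in the closing step of Lemma \ref{lem-z}. Once the character table is pinned down, the passage to the intersection matrices is routine.
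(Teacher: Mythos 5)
Your proposal addresses a statement the paper itself never proves: Theorem \ref{sch-f} is quoted from Kageyama--Saha--Das \cite{KSD}, so there is no internal proof to compare against. Your route is therefore genuinely new relative to the paper, and it is sound: you realize $\mc{F}$ as a fusion of $H(d,3)$ and certify it by the Bannai--Muzychuk criterion, computing the merged eigenvalues by a roots-of-unity filter on the Krawtchouk generating function $(1+2z)^{d-\nu}(1-z)^{\nu}$. The central computations check out: for $\nu\ge 1$ the $r=0$ term vanishes, the $r=1,2$ terms are conjugate, the formula $\theta_j(\nu)=\frac{2}{3}\,3^{d/2}\cos\bigl(\pi(3d-4\nu-4j)/6\bigr)$ is correct, its periodicity in $\nu$ modulo $3$ supplies exactly the dual partition the criterion requires, and oddness of $d$ is indeed what collapses the surviving values to $0$ and $\pm(-1)^l3^l$. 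What your approach buys, compared with the citation (and with the paper's general style in Lemma \ref{lem-z} and Proposition \ref{ExH(3,3)} of manipulating intersection matrices directly), is that the character table of $\mc{F}$ comes out first and for free; self-duality, the identification with the scheme $\mc{Z}$ of Lemma \ref{lem-z} (Remark \ref{rem-f}), and then the matrices $B_1,B_2,B_3$ via the formula $p^h_{ij}=\frac{1}{n k_h}\sum_{\nu}p_i(\nu)p_j(\nu)p_h(\nu)k_{\nu}$ all follow by machinery the paper already has in place.

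Three slips to repair, none fatal. First, your sentence ``for $\nu=0$ only the $r=0$ term survives'' contradicts your own (correct) valencies: at $\nu=0$ all three terms contribute, and it is precisely the $r=1,2$ terms, $(1+2\omega^{\pm 1})^{d}=\pm i(-1)^l3^l\sqrt{3}$, that produce the corrections $\pm(-1)^l3^l$ in $k_1,k_2$ and cancel in $k_3$; the true statement is the converse one you use next, namely that the $r=0$ term is nonzero only when $\nu=0$. Second, the factorization you quote for $p^1_{11}$ is off: with $a=3^l$ and $\varepsilon=(-1)^l$ one has $3p^1_{11}=a(a+2\varepsilon)$, not $(a+\varepsilon)(a+2\varepsilon)$. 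Third, when you take ``multiplicities equal to the valencies by self-duality,'' note that self-duality of the fusion is itself something to establish; the clean fix is to compute the fused multiplicities $\sum_{\nu\in\Delta_k}\binom{d}{\nu}2^{\nu}$ with the same filter---it is literally the valency computation over again---rather than assume it.
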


\begin{rem}\label{rem-f} For every $d$, the association scheme $\mc{F}$ belongs to the family of association schemes $\mc{Z}$ described in  Lemma \ref{lem-z}. In fact, if $l$ is odd, the two schemes $\mc{F}$ and $\mc{Z}$ have the same parameters with $m=3^l$. For each even integer $l$, the parameters of $\mc{F}$ are the same as those of $\mc{Z}$ with $m=3^l$ and the first and second association relations  switched.\end{rem}

\begin{cor}\label{cor-H(d,3)} For each $l\ge 1$, the relation graphs of the association scheme $\mc{F}$ above give rise to three non-isomorphic symmetric partial geometric designs with parameters
\[\big(3^{2l+1},\ 3^{2l}+(-1)^l3^l;\ 3^{4l-1}+(-1)^l3^{3l}+2\cdot 3^{2l-1}, \ 3^{4l-1}+(-1)^l3^{3l}+5\cdot 3^{2l-1}\big),\]
\[\big(3^{2l+1},\  3^{2l}-(-1)^{l}3^l; \ 3^{4l-1}-(-1)^{l}3^{3l}+2\cdot 3^{2l-1},\ 3^{4l-1}-(-1)^{l}3^{3l}+5\cdot 3^{2l-1}\big),\]
\[\big(3^{2l+1}, \ 3^{2l}; \ 3^{4l-1}-3^{2l-1}, \ 3^{4l-1}+2\cdot 3^{2l-1}\big).\]
\end{cor}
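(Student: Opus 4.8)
The plan is to deduce Corollary~\ref{cor-H(d,3)} from the combination of Remark~\ref{rem-f} and Theorem~\ref{thm-z}. Remark~\ref{rem-f} tells us that the KSD-scheme $\mc{F}$ is precisely a member of the family $\mc{Z}$ treated in Lemma~\ref{lem-z} and Theorem~\ref{thm-z}, with the parameter value $m=3^l$. Theorem~\ref{thm-z} already asserts that any three-class scheme whose character table matches the matrix $P$ displayed there yields three symmetric partial geometric designs from the incidence matrices $A_1$, $A_2$, and $A_3+A_0$, and it records the parameters $(v,k;\alpha,\beta)$ of those designs as explicit functions of $m$. So the essential content of the corollary is just the specialization $m=3^l$, followed by arithmetic simplification of the three parameter quadruples.

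First I would invoke Remark~\ref{rem-f} to justify applying Theorem~\ref{thm-z} to $\mc{F}$ with $m=3^l$. Since $3^l\equiv 0\pmod 3$, the hypothesis $m\equiv 0\pmod 3$ of Lemma~\ref{lem-z} (on which Theorem~\ref{thm-z} rests) is satisfied, so $\mc{F}$ does fall under the theorem. Next I would substitute $m=3^l$ into the three parameter sets of Theorem~\ref{thm-z} and simplify. For the valencies this is immediate: $m(m-1)=3^{2l}-3^l$, $m(m+1)=3^{2l}+3^l$, and $m^2=3^{2l}$. For the $\alpha,\beta$ values one expands, e.g.,
\[
\tfrac13 m^2(m^2-3m+2)=\tfrac13\,3^{2l}\bigl(3^{2l}-3^{l+1}+2\bigr)=3^{4l-1}-3^{3l}+2\cdot 3^{2l-1},
\]
and analogously for the remaining five quantities; the factor $\tfrac13$ is absorbed by lowering an exponent of $3$ by one. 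This reproduces the $l$-even case of the stated parameters.

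The one genuine subtlety, and the step I expect to require the most care, is the sign $(-1)^l$ appearing in the corollary. For $l$ even, substituting $m=3^l$ directly into the three lines of Theorem~\ref{thm-z} gives exactly the claimed expressions, since $(-1)^l=+1$ pairs the first listed design (valency $3^{2l}+3^l$) with the $k=m(m+1)$ row. For $l$ odd, however, Remark~\ref{rem-f} states that the first and second association relations of $\mc{F}$ are switched relative to $\mc{Z}$; consequently the design that Theorem~\ref{thm-z} produces from $A_1$ (with $k=m(m-1)=3^{2l}-3^l$) is, in $\mc{F}$, the design of valency $3^{2l}-3^l=3^{2l}+(-1)^l3^l$, and likewise the roles of $\alpha_1,\beta_1$ and $\alpha_2,\beta_2$ interchange. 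Thus the uniform formulas with $(-1)^l$ correctly encode both cases simultaneously: the swap for odd $l$ is exactly what turns the $m(m\mp1)$ valencies into $3^{2l}\pm(-1)^l3^l$. I would therefore organize the proof as two short cases keyed to the parity of $l$, verifying in each that the relabeling dictated by Remark~\ref{rem-f} matches the signs written in the three displayed parameter lines, and noting that the third design (valency $m^2=3^{2l}$, involving $A_3+A_0$) is parity-independent and so needs no case split.

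Finally, the non-isomorphism of the three resulting designs follows because their valencies $3^{2l}+(-1)^l3^l$, $3^{2l}-(-1)^l3^l$, and $3^{2l}$ are pairwise distinct for every $l\ge 1$; designs with different block sizes $k$ cannot be isomorphic, which completes the argument.
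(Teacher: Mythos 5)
Your route is genuinely different from the paper's and is sound in outline. The paper proves Corollary~\ref{cor-H(d,3)} by direct computation: it cubes the intersection matrices of $\mc{F}$ given in Theorem~\ref{sch-f}, exhibits the identities $B_1^3=(\cdots)B_1+(\cdots)(B_0+B_2+B_3)$, $B_2^3=(\cdots)B_2+(\cdots)(B_0+B_1+B_3)$, $(B_3+B_0)^3=(\cdots)(B_3+B_0)+(\cdots)(B_1+B_2)$ with the $(-1)^l$ coefficients already in place, and then transfers these to the adjacency matrices via the isomorphism between the intersection algebra and the Bose--Mesner algebra. You instead identify $\mc{F}$ with the abstract family $\mc{Z}$ via Remark~\ref{rem-f} (with $m=3^l$, which indeed satisfies the hypothesis $m\equiv 0\pmod 3$ of Lemma~\ref{lem-z}) and then specialize the parameter formulas of Theorem~\ref{thm-z}. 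This is cleaner, avoids cubing matrices altogether, and has the added merit of actually addressing the word ``non-isomorphic'' in the statement (designs with distinct block sizes $k$ cannot be isomorphic), a point the paper's proof passes over in silence.

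There is, however, a concrete error in your parity bookkeeping, and as written your two cases are internally inconsistent. Remark~\ref{rem-f} says that $\mc{F}$ and $\mc{Z}$ have the same parameters with \emph{no} relabeling when $l$ is odd, and that the first and second relations are switched when $l$ is \emph{even}; you attribute the switch to odd $l$. One can check the correct parity directly from Theorem~\ref{sch-f}: the valency of $R_1$ in $\mc{F}$ is $3^{2l}+(-1)^l3^l$, which equals $m(m-1)$ exactly when $l$ is odd and equals $m(m+1)$ when $l$ is even. Moreover, your cases do not cohere with your own premise: in the even case you pair the corollary's first design (valency $3^{2l}+3^l$) with the $k=m(m+1)$ row of Theorem~\ref{thm-z}, which is precisely the \emph{switched} pairing, yet you present it as direct substitution; in the odd case you invoke a switch but then write down the \emph{unswitched} pairing ($A_1$ yielding valency $m(m-1)$). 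The final pairings you assert are in fact the correct ones; the justifications attached to them are exchanged. The repair is simply to swap the parity labels in your case analysis. Note also that the unordered set of three parameter triples coming out of Theorem~\ref{thm-z} is the same for both parities, so the corollary's list survives your error; what genuinely depends on the parity, and what the paper's $(-1)^l$ identities make precise, is which relation graph of $\mc{F}$ carries which parameter set.
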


\begin{proof} The proof directly follows from the following identities of the intersection matrices:
\[B_1^3=\left ( 3^{4l-1}+(-1)^l3^{3l}+5\cdot 3^{2l-1}\right )B_1+\left ( 3^{4l-1}+(-1)^l3^{3l}+2\cdot 3^{2l-1}\right )(B_0+B_2+B_3),\]
\[B_2^3=\left ( 3^{4l-1}-(-1)^{l}3^{3l}+5\cdot 3^{2l-1}\right )B_2+ \left ( 3^{4l-1}-(-1)^{l}3^{3l}+2\cdot 3^{2l-1}\right )(B_0+B_1+B_3),\]
\[(B_3+B_0)^3=\left(3^{4l-1}+2\cdot3^{2l-1}\right)(B_3+B_0)+\left(3^{4l-1}-3^{2l-1}\right)(B_1+B_2).\] Thus three designs whose incidence matrices are $A_1$, $A_2$ and $A_3+I$ are obtained from this association scheme.
\end{proof}

\subsection{From orthogonal arrays of strength two} Our search for three-class association schemes whose relation graphs give rise to partial geometric designs continues in the context of orthogonal arrays and linear codes. Here we find another way to construct the KSD-schemes using orthogonal arrays of strength 2 coming from suitable linear codes. Taking the runs (codewords) as the elements of the underlying set and defining association relations according to the Hamming distances between the codewords, we obtain three-class association schemes that are isomorphic to KSD-schemes.

Let $S$ be a set of $q$-symbols where $q\ge 2$, and let $I:=\{1, 2, \dots, m\}$. Let $X=S^I$ be the set of all maps from $I$ to $S$. Note that we can view each element $\flf{x}\in X$ as an $m$-tuple $(x_1, x_2, \dots, x_m)$ with symbols $x_i$ in $S$.  A \ti{code} is simply a subset $C$ of $X$. In the case where $S = \mbb{F}_q$ and $C$ forms a vector space over $\mbb{F}_q$, we call $C$ a \ti{linear code}.
Next, an $N$-element subset $Y$ of $X$, viewed as an $N \times m$ array of symbols, is called an \ti{orthogonal array of strength} $t$ and \ti{index} $\lambda$ if every $N \times t$ subarray contains all  possible $q^t$ $t$-tuples exactly $\lambda$ times. Following the notation of \cite{HSS}, we denote an orthogonal array $Y$ with the above parameters by OA$(N, m, q, t)$ where $\lambda=N/q^t$. The rows of an OA$(N, m, q, t)$ are sometimes called the \ti{runs} of the orthogonal array. In what follows, the runs of $Y$ will be denoted $\flf{y}_1, \flf{y}_2, \dots, \flf{y}_N$ with $\flf{y}_i=(y_{i1}, y_{i2}, \dots, y_{im})$. It will be clear whether we consider $Y$ as an $N$-set or as an array from the context.

An orthogonal array is \ti{linear} if it takes a finite field as its symbol set, and its rows form a vector space over the field. We will show that for every positive integer $l$, there exists a linear orthogonal array $OA(3^{2l+1}, 2l+3, 3,2)$ coming from a certain linear $[2l+3, 2l+1]_3$-code which gives an association scheme isomorphic to a KSD-scheme.
First we have the following example.

\begin{ex} \label{OA-(27)}
Two mutually orthogonal Latin cubes of order $3$ give an orthogonal array OA$(27, 5, 3, 2)$.  The transpose of the orthogonal array is expressed as the following $5\times 27$ array $M$.
\[M=\left ( \begin{array}{ccccccccc}
000& 000& 000 & 111& 111& 111& 222& 222& 222\\
000& 111& 222 & 000& 111& 222& 000& 111& 222\\
012& 012& 012 & 012& 012& 012& 012& 012& 012\\
000& 111& 222 & 222& 000& 111& 111& 222& 000\\
012& 120& 201 & 012& 120& 201& 012& 120& 201\\ \end{array}\right )\]

A 3-class association scheme is then obtained as follows:

\fl (i) Let $Y:=\{\flf{y}_i: i=1, 2, \dots, 27\}$ be the set of columns of $M$ (runs of OA$(27, 5, 3, 2)$).

\fl (ii) Let $R_0=\{(\flf{y}_i, \flf{y}_i): \flf{y}_i\in Y\}$, and let
\[\begin{array}{l}
R_1=\{(\flf{y}_i,\flf{y}_j)\in Y\times Y: \delta(\flf{y}_i,\flf{y}_j)=2 \mbox{ or } 5\}\\
R_2=\{(\flf{y}_i,\flf{y}_j)\in Y\times Y: \delta(\flf{y}_i,\flf{y}_j)=3\}\\
R_3=\{(\flf{y}_i,\flf{y}_j)\in Y\times Y: \delta(\flf{y}_i,\flf{y}_j)=4\}\\ \end{array}\]
Then $\mc{Y}=(Y, \{R_i\}_{0\le i\le 3})$ is an association scheme.
Its character table is given by
\[P=Q =\left [ \begin{array}{rrrr}
1 & 6& 8& 12\\
1 & 3 & -4 & 0\\
1 & -3 & -1 & 3\\
1 & 0 & 2 & -3\\
\end{array}\right ], \]
and we have the following identities in the Bose-Mesner algebra of $\mc{Y}$:
$$A_1^3=15A_1 +6(J-A_1),\  (A_2+I)^3=33(A_2+I)+24(J-A_2-I) \mbox{ and }
A_3^3=69A_3+60(J-A_3).$$
\end{ex}

\begin{rem}\label{rem-H(3.3)}
\begin{enumerate}
\item[(1)] The above 3-class association scheme is shown to be isomorphic to $H(3,3)$ discussed in Proposition \ref{ExH(3,3)}. Here the isomorphism is established by the fact that all Hamming schemes except for $H(2,4)$ are uniquely determined by their intersection numbers.
\item[(2)]
This OA$(27, 5, 3, 2)$ can be also obtained as the codewords generated by the three vectors $[1, 0, 0, 1, 1],$ $[0, 1, 0, 0, 1]$ and $[0, 0, 1, 1, 0]$ over $\mbb{F}_3$ in the five dimensional Hamming space $H(5,3)$. 
\end{enumerate}\end{rem}

In the rest of this section,
our alphabet $S$ (the symbol set) will be finite field $\mbb{F}_q$ of order $q$. We will denote the code of \ti{length} $n$ and \ti{size} $N$ over the alphabet $\mbb{F}_q$ by $(n, N)_q$ or by $(n,N,d)_q$ if the \ti{minimum distance} $d$ is known. If the code is an $m$-dimensional subspace of the $n$-dimensional vector space $\mbb{F}_q^n$, we denote it by $[n, m]_q$-code (or $[n,m]$-code if the field is clear from the context). We denote the dual code of a $[n, m]$-code $\mc{C}$, by $\mc{C}^{\perp}$ and its minimum distance by $d^{\perp}$, the dual distance of $\mc{C}$. Two linear codes are \ti{isomorphic} if one can be obtained from the other by permuting the coordinate positions and multiplying each coordinate position by a nonzero element of the field. Two linear orthogonal arrays are considered to be the same if the associated codes are isomorphic as linear codes.

In order to describe how we find linear orthogonal arrays OA$(3^{2l+1}, 2l+3, 3, 2)$ from $[2l+3, 2l+1]_3$-codes, we recall a few useful facts which link orthogonal arrays and linear codes.
R. C. Bose \cite{Bo} explicitly specified how the strength of a linear orthogonal array is determined by the associated code. Ph. Delsarte \cite{De} specified connections between the codes and orthogonal arrays. Our results are based on the following theorem which states a special case of a more profound result due to him. It suffices for our construction of orthogonal arrays as we chiefly concentrate on the linear case. (See \cite[Ch. 4]{HSS} for more information.)

\begin{thm} \cite{De}\label{thm-de} If $\mc{C}$ is a $(n, N, d)_q$ linear code over $\mbb{F}_q$ with dual distance $d^{\perp}$ then the codewords of $\mc{C}$ form the rows of an OA$(N, n, q, d^{\perp}-1)$ with entries from $\mbb{F}_q$. Conversely, the rows of a linear OA$(N, n, q, t)$ over $\mbb{F}_q$ form a $(n,N,d)_q$ linear code over $\mbb{F}_q$ with dual distance $d^{\perp}\ge t+1$. If the orthogonal array has strength $t$ but not $t+1$, $d^{\perp}$ is precisely $t+1$.
\end{thm}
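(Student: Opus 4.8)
The plan is to reduce everything to a single linear-algebraic fact about coordinate projections of a linear code and then read off all three assertions. Write $\mc{C}\subseteq \F^n$ for the linear code, $N=|\mc{C}|$, and for a subset $T\subseteq\{1,\dots,n\}$ of columns let $\pi_T\colon \mc{C}\to \F^{|T|}$ be the projection onto the coordinates in $T$. The crucial observation is that $\pi_T$ is $\F$-linear, so every nonempty fiber $\pi_T^{-1}(\flf{v})$ is a coset of $\ker\pi_T$ and hence has the same cardinality $|\ker\pi_T|=N/|\mathrm{im}\,\pi_T|$. Consequently, the codewords of $\mc{C}$ form an OA of strength $t$ if and only if for every $T$ with $|T|=t$ the projection $\pi_T$ is surjective onto $\F^{|T|}$; in that case every $t$-tuple occurs exactly $N/q^t$ times, which is the required index $\lambda$.

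Next I would translate surjectivity of $\pi_T$ into a statement about the dual code. A linear functional on $\F^{|T|}$ is given by a vector $\flf{c}$ supported on $T$, and it annihilates $\mathrm{im}\,\pi_T$ precisely when $\sum_{i\in T}c_ix_i=0$ for every $\flf{x}\in\mc{C}$, i.e.\ when the vector $\flf{c}$ (extended by zeros outside $T$) lies in $\mc{C}^{\perp}$. Hence $\pi_T$ fails to be surjective if and only if there is a nonzero codeword of $\mc{C}^{\perp}$ whose support is contained in $T$. Equivalently, $\pi_T$ is surjective for every $T$ of size $t$ if and only if $\mc{C}^{\perp}$ contains no nonzero word of weight at most $t$, that is, if and only if $d^{\perp}\ge t+1$.

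With this equivalence in hand the three claims follow directly. For the forward direction, the definition of dual distance gives that $\mc{C}^{\perp}$ has no nonzero word of weight at most $d^{\perp}-1$, so by the equivalence every projection onto $d^{\perp}-1$ columns is surjective and, by the first paragraph, the codewords form an OA$(N,n,q,d^{\perp}-1)$. For the converse, strength $t$ forces every $t$-column projection to be surjective, so the equivalence yields $d^{\perp}\ge t+1$. Finally, if the array has strength exactly $t$ but not $t+1$, then $d^{\perp}\ge t+1$ from the converse, while $d^{\perp}\ge t+2$ would, by the forward direction, make $\mc{C}$ an OA of strength $d^{\perp}-1\ge t+1$, contradicting the failure of strength $t+1$; hence $d^{\perp}=t+1$.

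The argument is essentially bookkeeping once the reduction is set up, so there is no serious obstacle; the one point that needs care is the converse inclusion in the second paragraph, where a dual word of weight $w<t$ must be handled by padding its support to a $t$-set $T$ before invoking non-surjectivity of $\pi_T$, together with the sharpness statement, which requires invoking both directions of the equivalence at once.
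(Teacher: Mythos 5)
Your proof is correct. Note first that the paper itself offers no proof of this statement: it is quoted, with attribution, as a special case of a deeper theorem of Delsarte \cite{De}, so there is no internal argument to compare yours against; the closest material in the paper is Remark \ref{rem-con}, which establishes only a reformulation of the forward direction. Concretely, that remark works with an $m\times n$ generator matrix $G$ and observes that if every $t$ columns of $G$ are linearly independent, then the $q^m$ codewords form an OA$(q^m,n,q,t)$. Your argument, phrased via the projections $\pi_T\colon\mc{C}\to\mbb{F}_q^{|T|}$ and their annihilators, is the coordinate-free version of the same mechanism: non-surjectivity of $\pi_T$ is witnessed by a nonzero functional vanishing on $\mathrm{im}\,\pi_T$, i.e.\ by a nonzero word of $\mc{C}^{\perp}$ supported inside $T$, which is exactly a linear dependence among at most $t$ columns of $G$. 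What your write-up buys beyond the paper's remark is the full biconditional (strength $t$ if and only if $d^{\perp}\ge t+1$), hence both directions of the theorem as well as the equality case $d^{\perp}=t+1$ when the strength is exactly $t$, obtained by playing the two directions of the equivalence against each other. The two points that genuinely need care --- that all nonempty fibers of $\pi_T$ are cosets of $\ker\pi_T$ and hence equinumerous, and that a dual word of weight $w<t$ must have its support padded to a $t$-set before the equivalence is invoked --- are both handled (and the latter explicitly flagged) in your argument.
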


\begin{rem} \label{rem-con} In particular, given $q, m$ and fixed strength $t$, if the $m\times n$ generator matrix $G$ for any $[n,m]_q$-code $\mc{C}$ has the property that every $t$-columns of $G$ are linearly independent vectors in $\mbb{F}_q^m$ over $\mbb{F}_q$, then the $q^m$ codewords of $\mc{C}$ form a linear orthogonal array OA$(q^m, n, q, t)$. For this, we note that $G$ generates $q^m$ codewords all of which become the runs of the $q^m\times n$ orthogonal array, say $M$. The $q^m\times t$ subarray obtained by taking any $t$-columns of $M$ contains linear combinations of the rows of the corresponding $t$-columns of $G$; and it contains each of $q^t$ $t$-tuples of symbols exactly $q^{m-t}$ times.\end{rem}

Thus, in order to obtain orthogonal arrays of strength 2, we simply look at the generator matrices of all $[n,m]_q$-codes over $\mbb{F}_q$ and pick the ones whose dual distance is 3.
By Theorem \ref{thm-de} and Remark \ref{rem-con}, we know that if such a $[2l+3, 2l+1]_3$-code yields an orthogonal array, then it must be a OA$(3^{2l+1}, 2l+3, 3, 2)$. Now we demonstrate an infinite family of orthogonal arrays that give rise to the three-class association schemes we seek.

Consider the $[2l+3,2l+1]_3$-code $\mc{C}$ with generator matrix

$$
G = \left [ \begin{array}{cccccc|cc}
 &  &  &  &  &  & 1 & 1 \\
 &  &  &  & &   & 0 & 1  \\
 &  & & I_{2l+1} &  &  & 1 & 0 \\
 &  &  &  &  &  & 0 & 0 \\
 & &  & &  &  &\vdots&\vdots \\
 &  &  &  &  &  & 0 & 0 \\
 \end{array}
\right]
$$
where $I_{2l+1}$ denotes the $(2l+1)\times (2l+1)$ identity matrix. Then the dual code $\mc{C}^{\perp}$ is generated by
$$
G^{\perp} = \left[ \begin{array}{cccccccc}
1 & 0 & 1 & 0 & \cdots & 0 & 2 & 0 \\
0 & 1 & 2 & 0 & \cdots & 0 & 1 & 2
\end{array}
\right]
$$
with its weight distribution $(1, 0, 0, 4, 2, 2, 0, \cdots, 0)$.
Thus, by Theorem 16, the codewords of the $(2l+3, 3^{2l+1})_3$-code $\mc{C}$ with dual distance $3$, form the rows of an $OA(3^{2l+1},2l+3,3,2)$.

Next, consider the $[2l+3,2l+1]_3$-codes $\mc{C}'$ with generator matrix
$$
G' = \left [ \begin{array}{cccccc|cc}
 &  &  &  &  &  & 0& 0 \\
 &  &  &  & &   & 0 & 0  \\
 &  & & I_{2l+1} &  &  & \vdots & \vdots \\
 &  &  &  &  &  &  & \\
 &  &  &  &  &  & 0 & 0 \\
 \end{array}
\right].
$$
We can identify $\mc{C}'$ as the Hamming space $H(2l+1,3)$ by viewing $\mc{C}'$ as a natural embedding of $\F_3^{2l+1}$ in $\F_3^{2l+3}$. Moreover, suppose we define
$$M_0 = \{\textbf{0}\},\quad M_i = \{\textbf{x} \in \mc{C}\setminus \{\textbf{0} \} \hs \colon \hs \delta_H (\textbf{0}, \textbf{x}) \equiv i (\text{mod } 3) \}\ \mbox{ for } i = 1, 2, 3,$$ and the sets $M'_i$ for $\mc{C}'$ in the same manner. Then $\{M_i \hs \colon \hs i = 0, 1, 2, 3\}$ forms a partition of $\mc{C}$. In the same spirit, $\{M'_i \hs \colon \hs i = 0, 1, 2, 3\}$ forms a partition of $\mc{C'}$ according to their Hamming weights. By Theorem \ref{sch-f}, we know that $$\{|M'_i| \hs \colon \hs i = 0, 1, 2, 3\} = \{1, 3^l(3^l+1), 3^l(3^l-1),3^{2l}-1\}.$$
By establishing a vector space isomorphism between $\mc{C}'$ and $\mc{C}$ as below, we can also see that $$\{|M_i| \hs \colon \hs i = 0, 1, 2, 3\} = \{1, 3^l(3^l+1), 3^l(3^l-1),3^{2l}-1\}.$$

For this, let $\flf{e}_1, \flf{e}_2, \dots, \flf{e}_{2l+1}$ denote the rows of $G'$, as the basis vectors for $\mc{C}'$, and $\tbf{r}_1, \tbf{r}_2, \dots, \tbf{r}_{2l+1}$ denote the rows of $G$, which form a basis for $\mc{C}$. Define a map $\phi: \mc{C'} \rightarrow \mc{C}$ by
$$
\phi(\flf{e}_1) = \tbf{r}_1+\tbf{r}_2+\tbf{r}_3, \hs \phi(e_2) = \tbf{r}_2, \hs \phi(e_3) = \tbf{r}_3,
$$
and for $i >3$,
$$
\phi(\flf{e}_i) =
\begin{cases}
\tbf{r}_i + \tbf{r}_{i+1}, & \text{if } i \equiv 0(\text{mod }2) \\
\tbf{r}_{i-1}+2\tbf{r}_i, & \text{if } i \equiv 1(\text{mod }2)
\end{cases}.
$$
It is clear that $\phi$ is a vector space isomorphism.  Furthermore, this map $\phi$ maps $M'_i$ to $M_i$ setwise; namely, $\phi(M'_1) = M_2$, $\phi(M'_2) = M_1$ and $\phi(M'_3) = M_3$. For this,
we recall that the sets $M'_i$ and $M_i$ were defined according to the Hamming weight of the codewords. Notice that a weight-$s$ codeword $\tbf{x}$ can be expressed as $\sum\limits_{h=1}^s \alpha_h\flf{e}_{j_h}$ for some $\alpha_h\in \mbb{F}_3^{*}$ and some $s$-set $\{j_1, j_2, \dots, j_s\}$ with $1\le j_1<j_2<\cdots <j_s\le 2l+1$ (where $\mbb{F}_3^{*}=\mbb{F}_3-\{0\}$). So, we can express
$$
M'_i = \bigg\{\sum_{h = 1}^{s} \alpha_h \flf{e}_{j_h} \hs \colon \hs {1\le j_1<j_2<\cdots <j_s\le 2l+1},\ \alpha_h \in \F_3^*,\  {s\equiv i(\mbox{mod} {3})},\ 1\le s\le 2l+1\bigg\}.
$$
Then, by direct computation, it can be verified that
$$
\phi(M'_i) = \bigg\{\sum_{h = 1}^{s} \alpha_h \phi(\flf{e}_{j_h})\hs\colon\hs  \sum_{h = 1}^{s} \alpha_h \flf{e}_{j_h}\in M'_i\bigg\}\subseteq
\begin{cases}
M_2 & \text{if } i = 1 \\
M_1 & \text{if } i = 2 \\
M_3 & \text{if } i = 3 \\
\end{cases}.
$$
So, it follows that
\[\phi(M'_1) = M_2,\quad \phi(M'_2) = M_1,\quad \phi(M'_3) = M_3\] as $|M'_1| +|M'_2| +|M'_3| = |M_1|+|M_2|+|M_3| = 3^{2l+1}-1$.

As a consequence, we have the following.
\begin{thm} \label{thm-OA}
\begin{enumerate}
\item For each $l\in \mbb{N}$, there exists a linear $[2l+3, 2l+1]_3$-code $\mc{C}$ with dual distance 3, such that $\mc{C}$ is partitioned into $M_0, M_1, M_2, M_3$ where $M_0=\{\textbf{0}\}$ and for $i=1, 2, 3$, $M_i=\{\tbf{x} \in \mc{C}\setminus\{\tbf{0}\} \colon \delta_H(\textbf{0}, \textbf{x})\equiv i \pmod{3}\}$
with cardinalities $|M_i|\in \{3^{2l}-3^l, 3^{2l}+3^l, 3^{2l}-1\}$. In this case, any linear $[2l+3, 2l+1]_3$-code equivalent to $\mc{C}$ is an orthogonal array OA$(3^{2l+1}, 2l+3, 3, 2)$.

\item Defining relations on $\mc{C}$ by
\begin{align*}
R_0 &=\{(\tbf{x}, \tbf{x}) \hs \colon \hs \tbf{x} \in \mc{C} \} \\
R_1&=\{(\tbf{x},\tbf{y})\in \mc{C}\times \mc{C} \hs : \hs   \delta_H(\tbf{x}, \tbf{y}) \equiv 1 (\text{mod }3)\}\\
R_2 &=\{(\tbf{x},\tbf{y})\in \mc{C}\times \mc{C} \hs : \hs  \delta_H(\tbf{x}, \tbf{y}) \equiv 2 (\text{mod }3)\}\\
R_3 &=\{(\tbf{x},\tbf{y})\in \mc{C}\times \mc{C} \hs : \hs \tbf{x} \neq \tbf{y},  \delta_H(\tbf{x}, \tbf{y}) \equiv 0 (\text{mod }3)\}
\end{align*}
we obtain a three-class association scheme $\mc{W}=(\mc{C}, \{R_i\}_{0\le i\le 3})$
which has the same parameters as those for the KSD-scheme $\mc{F}$ for $d=2l+1$ defined in Theorem \ref{sch-f}.
\end{enumerate}
\end{thm}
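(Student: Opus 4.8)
The plan is to recognize that both assertions rest almost entirely on the constructions already assembled before the statement, and that the single genuinely structural ingredient is that $\mathcal{C}$ and $\mathcal{C}'$ are \emph{linear} codes, so that the relations in question are invariant under the additive group translations. First I would therefore isolate, as the conceptual core, the principle that a group isomorphism carrying one weight-residue partition onto another automatically transports a translation (Cayley) association scheme; everything else is bookkeeping on quantities computed in the preamble.

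For part (1), I would read off from the weight enumerator $(1,0,0,4,2,2,0,\dots,0)$ of $\mathcal{C}^{\perp}$ that the dual distance is $d^{\perp}=3$, and then invoke Theorem \ref{thm-de}: a linear $(2l+3,3^{2l+1})_3$-code with dual distance $3$ has its codewords forming the rows of an $OA(3^{2l+1},2l+3,3,d^{\perp}-1)=OA(3^{2l+1},2l+3,3,2)$. The decomposition $\mathcal{C}=M_0\cup M_1\cup M_2\cup M_3$ is a partition of the abelian group $\mathcal{C}$ by the residue mod $3$ of the Hamming weight, and the cardinalities $\{|M_i|\}=\{1,3^l(3^l+1),3^l(3^l-1),3^{2l}-1\}$ are exactly those transported by the isomorphism $\phi$ from the weight distribution of $\mathcal{C}'\cong H(2l+1,3)$ (Theorem \ref{sch-f}), via $\phi(M'_i)=M_{\sigma(i)}$. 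Finally, since code equivalence only permutes coordinates and rescales them by nonzero elements of $\mathbb{F}_3$, it preserves Hamming weights and hence the dual distance; so any code equivalent to $\mathcal{C}$ is again an $OA(3^{2l+1},2l+3,3,2)$.

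For part (2), the key observation I would make explicit is that both $\mathcal{F}$ and the proposed $\mathcal{W}$ are \emph{translation schemes} on their respective additive groups. Because $\mathcal{C}$ is linear, $\delta_H(\mathbf{x},\mathbf{y})=\mathrm{wt}(\mathbf{x}-\mathbf{y})$ with $\mathbf{x}-\mathbf{y}\in\mathcal{C}$, so $(\mathbf{x},\mathbf{y})\in R_i$ if and only if $\mathbf{x}-\mathbf{y}\in M_i$; moreover negation fixes the support of a vector over $\mathbb{F}_3$, whence $M_i=-M_i$ and every $R_i$ is symmetric. Identically, since $\mathcal{C}'$ is the full Hamming space $H(2l+1,3)$ and $\mathcal{F}$ is the fusion $S_j=\bigcup_i R_{3i+j}$, one has $(\mathbf{x}',\mathbf{y}')\in S_j$ if and only if $\mathbf{x}'-\mathbf{y}'\in M'_j$, so $\mathcal{F}$ is precisely the translation scheme on $\mathcal{C}'$ with connection sets $\{M'_j\}$.

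With these two descriptions in hand I would finish by transporting the scheme structure along $\phi$. Since $\phi\colon\mathcal{C}'\to\mathcal{C}$ is additive and $\phi(M'_i)=M_{\sigma(i)}$ (with $\sigma$ the transposition $(1\,2)$ recorded in the preamble), linearity gives
\[(\mathbf{x}',\mathbf{y}')\in S_i \iff \phi(\mathbf{x}')-\phi(\mathbf{y}')=\phi(\mathbf{x}'-\mathbf{y}')\in M_{\sigma(i)} \iff (\phi(\mathbf{x}'),\phi(\mathbf{y}'))\in R_{\sigma(i)},\]
so the bijection $\phi$ carries each relation $S_i$ onto $R_{\sigma(i)}$. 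Hence $\phi$ is an isomorphism of the underlying relational structures; since $\mathcal{F}$ is an association scheme by Theorem \ref{sch-f}, so is $\mathcal{W}$, and its intersection numbers coincide with those of $\mathcal{F}$ up to the relabeling $\sigma$ (each $|M_i|>0$ for $l\ge1$, so $\mathcal{W}$ genuinely has three classes). I expect the only real pitfall to be the temptation to argue via an isometry: $\phi$ does \emph{not} preserve Hamming distance, and the argument relies instead on the weaker fact that it preserves the weight-residue classes $M_i$, which is exactly what was verified when $\phi$ was defined. Once that distinction is acknowledged, the translation-scheme transport principle makes part (2) immediate.
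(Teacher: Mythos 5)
Your proposal is correct and takes essentially the same approach as the paper: part (1) is exactly the paper's summary of the construction preceding the theorem (dual weight distribution plus Delsarte's Theorem \ref{thm-de}, cardinalities transported by $\phi$, and invariance under code equivalence), and part (2) transports the KSD-scheme structure along the linear isomorphism $\phi$ using $\phi(M'_i)=M_{\sigma(i)}$, arriving at the same conclusion that the intersection numbers of $\mc{W}$ are $p^{\sigma(h)}_{\sigma(i)\sigma(j)}$ with $\sigma=(1\,2)$. Your explicit translation-scheme framing (translation-invariance of Hamming distance, additivity of $\phi$, and $M_i=-M_i$ for symmetry) simply spells out what the paper compresses into its one-line ``and thus,'' so the two arguments agree in substance.
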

\begin{proof} The statement (1) is summary of what we have discussed earlier. For statement (2),
we recall that, under the map $\phi$, all codewords of weight $i$ in $\mc{C}'$ are mapped to the codewords of weight $2i \pmod{3}$ in $\mc{C}$; and thus, for any $\tbf{x}, \tbf{y}\in \mc{C}'$, $\delta_H(\tbf{x}, \tbf{y})\equiv i \pmod{3}$ if and only if $\delta_H(\phi(\tbf{x}), \phi(\tbf{y}))\equiv 2i \pmod{3}$.
Therefore, the three-class association scheme $\mc{W}$ defined on $\mc{C}$ and the KSD-scheme $\mc{F}$ defined on $H(2l+1, 3)$ in Theorem \ref{sch-f} share the same parameter sets. That is, if the parameters of KSD scheme are $p_{ij}^h$, then those for $\mc{W}$ are $p_{\sigma(i) \sigma(j)}^{\sigma(h)}$ where $\sigma=(12)$ is the transposition in $S_3$.
\end{proof}

We now give an example to illustrate what we have discussed in this subsection.

\begin{ex} \label{ex-[7,5]} The linear $[7,5]_3$-code $\mc{C}$ generated by
\[ \left(\begin{array}{rrrrrrr}
1 & 0 & 0 & 0 & 0 & 1 & 1 \\
0 & 1 & 0 & 0 & 0 & 0 & 1 \\
0 & 0 & 1 & 0 & 0 & 1 & 0 \\
0 & 0 & 0 & 1 & 0 & 0 & 0 \\
0 & 0 & 0 & 0 & 1 & 0 & 0 \\
\end{array}\right)\]
has weight distribution $(1,4,8,24,60,82,56,8)$ and dual weight distribution $(1,0,0,4,2,2,0,0)$.
This code $\mc{C}$ (and any code equivalent to $\mc{C}$) gives OA$(243, 7, 3, 2)$. By defining association relations by
\begin{eqnarray*}
R_0 &=& \{(x,x) \hs | \hs x \in X\}\\
R_1 &=& \{(x,y) \hs | \hs \delta_{H}(x,y) \in \{1,4,7\}\} \\
R_2 &=& \{(x,y) \hs | \hs \delta_{H}(x,y)  \in \{2,5\} \} \\
R_3 &=& \{(x,y) \hs | \hs \delta_{H}(x,y) \in \{3,6\}\},
\end{eqnarray*}
we obtain a three-class association scheme, described as $\mc{W}$.
The intersection matrices of $\mc{W}$ are given by
\[B_1 =\left [ \begin{array}{cccc}
0 & 1 & 0& 0\\
72 & 21 & 24 & 18\\
0& 30& 24 & 27\\
0 & 20 & 24 &27 \\
\end{array}\right ] \quad
B_2 =\left [ \begin{array}{cccc}
0 & 0 & 1& 0\\
0 & 30 & 24& 27\\
90& 30& 33 & 36\\
0 & 30 & 32 & 27\\
\end{array}\right ] \quad
B_3 =\left [ \begin{array}{cccc}
0 & 0& 0& 1\\
0 & 20 & 24 & 27\\
0& 30& 32 & 27\\
80& 30 & 24 & 25\\
\end{array}\right ]. \]
and for this association scheme, we have the following identities in the Bose-Mesner algebra of $\mc{W}$:
\begin{eqnarray*}
A_1^3 &=& 1593A_1 +1512(J-A_1),\\
A_2^3 &=& 3051A_2+2970(J-A_2),\\
(A_3+I)^3 &=& 2241(A_3+I)+2160(J-A_3-I).\\
\end{eqnarray*}
Therefore, we obtain three partial geometric designs with parameters
\[(v, k; \alpha, \beta)=(243, 72; 1512, 1593), (243, 90; 2160, 2241), (243, 81; 2970, 3051).\]
\end{ex}

\section{Directed strongly regular graphs}\label{sec-dsrg}

Given a partial geometric design, Theorems 2.1 and 2.2 in \cite{BOS} tell us how to construct two directed strongly regular graphs. One is defined on the flags of the design, while the other is defined on the antiflags. Here we list the parameters of the directed strongly regular graphs that we can obtain from the partial geometric designs constructed above.

\begin{defn}
A \textit{directed strongly regular graph} (DSRG) with parameters $({\tt v}, {\tt k}, {\tt t}, \lambda', \mu')$ is a directed graph on ${\tt v}$ vertices without loops such that
\begin{itemize}
\item[(1)] Every vertex has in-degree and out-degree ${\tt k}$,
\item[(2)] Every vertex has ${\tt t}$ out-neighbors which are also in-neighbors, and
\item[(3)] For any two distinct vertices $x$ and $y$, the number of directed paths from $x$ to $y$ of length $2$ is $\lambda'$ if $ x \rightarrow y$ and is $\mu'$ otherwise.
\end{itemize}\end{defn}

We observe that the adjacency matrix of a DSRG with parameters $({\tt v, k, t}, \lambda', \mu')$ has the property that  \[AJ=JA={\tt k}J \ \mbox{ and }\ A^2={\tt t}I+\lambda' A+\mu'(J-I-A).\]

\begin{thm}\cite{BOS}
 Let $(P,\mc{B})$ be a $1$-design. The following three statements are equivalent.
\begin{enumerate}
\item $(P, \mc{B})$ is a partial geometric design.
\item The  directed graph $\Gamma$ defined by  $V(\Gamma) = \{(p,B) \in P \times \mc{B} \colon p \in B\}$ with the adjacency
 $$
 (p,B) \rightarrow (q,C) \text{ if and only if } (p,B) \neq (q,C) \text{ and }p \in C,
 $$
  is a DSRG.

\item The directed graph $\Gamma'$ defined by  $V(\Gamma') = \{(p,B) \in P \times \mc{B} \colon p \notin B\}$ with adjacency
 $$
 (p,B) \rightarrow (q,C) \text{ if and only if } p \in C,
 $$
 is a DSRG.
\end{enumerate}
\end{thm}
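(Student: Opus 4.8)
The plan is to translate the combinatorial definition of the digraphs $\Gamma$ and $\Gamma'$ into a single matrix identity relating the adjacency matrix $A$ of each to the point--block incidence matrix $N$, and then to recognize the defining equations of a DSRG as being equivalent, entry by entry, to the partial geometric identity $NN^TN=\beta N+\alpha(J-N)$ of \eqref{eq2}. Throughout let $v,b,k,r$ be the $1$-design parameters, so $JN=kJ$ and $NJ=rJ$. A preliminary remark organizes the work: the two degree conditions $AJ=JA=\mathtt{k}J$ will hold for the flag (resp.\ antiflag) graph of \emph{every} $1$-design and so carry no information. Indeed, for the flag graph one checks $UJ=J$ and $WJ=rk\,J$ (notation below), whence $AJ=JA=(rk-1)J$. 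The whole content of the theorem is therefore the equivalence between the quadratic DSRG identity $A^2=\mathtt{t}I+\lambda'A+\mu'(J-I-A)$ and the cubic identity for $N$.

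For statement (2), introduce two $\{0,1\}$-matrices indexed by flags and points. Let $U$ be the (flags$\times$points) matrix with $U_{(p,B),x}=[\,p=x\,]$, recording the point of a flag, and let $W$ be the (points$\times$flags) matrix with $W_{x,(q,C)}=[\,x\in C\,]$, recording membership in the block of a flag. Then $(UW)_{(p,B),(q,C)}=[\,p\in C\,]$, so $A=UW-I$, the $-I$ deleting the loop $UW$ places on each flag. The key computation is the bilinear identity
\[WU=NN^T,\qquad (WU)_{x,y}=|\{C\in\mc{B}\colon x\in C,\ y\in C\}|.\]
Writing also $W=NV^T$, where $V$ is the (flags$\times$blocks) matrix $V_{(p,B),C}=[\,B=C\,]$, associativity gives
\[(A+I)^2=(UW)^2=U(WU)W=U(NN^TN)V^T,\qquad (A+I)^2_{(p,B),(q,C)}=(NN^TN)_{p,C}.\]
Thus $(A+I)^2$ is the pull-back of $NN^TN$ along the maps (first flag $\mapsto$ its point $p$) and (second flag $\mapsto$ its block $C$); together with $UNV^T=A+I$ and $UJV^T=J$ this is the entire dictionary I need.

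The forward implication $(1)\Rightarrow(2)$ is then immediate: substituting $NN^TN=\beta N+\alpha(J-N)$ gives $(A+I)^2=\beta(A+I)+\alpha\bigl(J-(A+I)\bigr)$, and expanding yields $A^2=(\beta-\alpha-2)A+(\beta-\alpha-1)I+\alpha J$, i.e.\ the DSRG identity with $\mathtt{t}=\beta-1,\ \lambda'=\beta-2,\ \mu'=\alpha$. Conversely, if $\Gamma$ is a DSRG then $(A+I)^2=A^2+2A+I$ equals $\lambda'+2$ on every arc, $\mu'$ on every non-arc off the diagonal, and $\mathtt{t}+1$ on the diagonal; since its $((p,B),(q,C))$-entry is $(NN^TN)_{p,C}$ and every pair $(p,C)\in P\times\mc{B}$ occurs as such an entry, we read off $(NN^TN)_{p,C}=\beta:=\lambda'+2$ when $p\in C$ and $=\alpha:=\mu'$ when $p\notin C$, which is exactly $NN^TN=\beta N+\alpha(J-N)$. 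Statement (3) runs in perfect parallel with antiflag matrices $U',W'$ given by the same formulas on the antiflag set: here $A'=U'W'$ with \emph{no} correction (a loop would need $p\in B$, impossible for an antiflag), the bilinear identity becomes $W'U'=rJ-NN^T$ coming from $|\{C\colon x\in C,\ y\notin C\}|=r-(NN^T)_{x,y}$, and hence $(A')^2=rk\,J-U'(NN^TN)V'^T$ with the same pull-back entry $(NN^TN)_{p,C}$. The two implications follow as before, now with $\mathtt{t}=\mu'=rk-\alpha,\ \lambda'=rk-\beta,\ \mathtt{k}=r(v-k)$.

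I expect the only delicate point to be the two converse implications, where one must descend from a two-valued pattern on the large flag- (resp.\ antiflag-) indexed matrix back to the identity for $NN^TN$ on points and blocks. The mechanism that makes this automatic is precisely the pull-back formula $(A+I)^2_{(p,B),(q,C)}=(NN^TN)_{p,C}$ together with the fact that \emph{every} point--block pair is represented among these entries. Making that last claim airtight requires the mild nondegeneracy hypotheses $k\ge 2$ for the flag graph, and $k<v,\ r<b$ for the antiflag graph, which ensure that the arc case ($p\in C$) and the non-arc case ($p\notin C$) are both populated by genuine flag (resp.\ antiflag) pairs, so that no entry of $NN^TN$ is left undetermined; these hold for every nondegenerate design, and in particular for all designs constructed in this paper.
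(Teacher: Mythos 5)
The paper offers no proof of this theorem to compare against: the statement is imported wholesale from \cite{BOS}, and the only related content in the paper is Remark \ref{rem-dsrg}, which tabulates the resulting DSRG parameters. Judged on its own, your argument is correct. The dictionary $A+I=UW$, $WU=NN^T$, $W=NV^T$, giving $(A+I)^2_{(p,B),(q,C)}=(NN^TN)_{p,C}$, together with its antiflag analogue $A'=U'W'$, $W'U'=rJ-NN^T$, giving $\bigl((A')^2\bigr)_{(p,B),(q,C)}=rk-(NN^TN)_{p,C}$, reduces both directions of each equivalence to reading off a two-valued pattern, and the parameters you extract, $(\mathtt{t},\lambda',\mu')=(\beta-1,\beta-2,\alpha)$ with $\mathtt{k}=rk-1$ for the flag graph and $(\mathtt{t},\lambda',\mu')=(rk-\alpha,rk-\beta,rk-\alpha)$ with $\mathtt{k}=r(v-k)$ for the antiflag graph, agree with the specializations listed in Remark \ref{rem-dsrg} (where $r=k$), a worthwhile consistency check. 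Two refinements are worth recording. First, in the converse for the flag graph the hypothesis $k\ge 2$ is unnecessary: every pair $(p,C)$ with $p\in C$ is realized as the \emph{diagonal} entry of $(A+I)^2$ at the flag $(p,C)$, which the DSRG identity forces to equal $\mathtt{t}+1$; taking $\beta=\mathtt{t}+1$ and $\alpha=\mu'$ settles all entries, and the relation $\mathtt{t}+1=\lambda'+2$ then falls out as a consequence rather than being needed as an input. Second, for the antiflag converse the hypotheses $k<v$ and $r<b$ are genuinely used to realize pairs with $p\in C$ as arcs; but for a $1$-design $k=v$ holds if and only if $r=b$, i.e.\ exactly when $N=J$, in which case $NN^TN=vbN$ makes statement (1) hold vacuously while $\Gamma'$ is empty, so excluding that degenerate case loses nothing.
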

\vhs

\begin{rem}\label{rem-dsrg} According to this theorem, any symmetric partial geometric design with parameters $(v, k; \alpha, \beta)$ gives rise to two DSRGs whose parameters $({\tt v, k, t}, \lambda', \mu')$ are given by
\[(v(v-k),\ k(v-k),\ k^2-\alpha,\ k^2-\beta,\ k^2-\alpha) \mbox{ and } (vk,\ k^2-1,\ \beta-1,\ \beta-2,\ \alpha).\]
Therefore, from the partial geometric designs we have obtained in this paper, (in Theorem \ref{thm-z}, Corollary \ref{cor-H(d,3)} and Theorem \ref{thm-OA}) we obtain the DRSGs with the following parameters.
\begin{table}
\begin{center}
\caption{Parameters of DSRGs obtained from partial geometric designs}
(Here $m=3^l$ for every positive integer $l$.)
\label{Table 1}
{\small{
\[\begin{array}{|c|c|c|c|c|}\hline
&&&&\\
{\tt v} & {\tt k} & {\tt t} & \lambda' & \mu'\\
\hline
&&&&\\
3m^3(2m+1)& m^2(m-1)(2m+1)& \frac13 m^2(2m^2-3m+1)& \frac13 m^2(2m^2-3m-2)& \frac13 m^2(2m^2-3m+1)\\
&&&&\\
3m^3(m-1)& m^2(m-1)^2-1& \frac13 m^2(m^2-3m+5)-1& \frac13 m^2(m^2-3m+5)-2 & \frac13 m^2(m^2-3m+2)\\
&&&&\\ \hline
&&&&\\
3m^3(2m-1)& m^2(m+1)(2m-1)& \frac13 m^2(2m^2+3m+1)& \frac13 m^2(2m^2+3m-2)& \frac13 m^2(2m^2+3m+1)\\
&&&&\\
3m^3(m+1)& m^2(m+1)^2-1& \frac13 m^2(m^2+3m+5)-1& \frac13 m^2(m^2+3m+5)-2& \frac13 m^2(m^2+3m+2)\\
&&&&\\ \hline
&&&&\\
3m^2(2m^2+1)& (m^2-1)(2m^2+1)&  \frac13 (2m^2-1)(m^2-1)& \frac13 (2m^4-8m^2+3) & \frac13 (2m^2-1)(m^2-1)\\
&&&&\\
3m^2(m^2-1) & (m^2-1)^2-1& \frac13 m^2(m^2+2)-1& \frac13 m^2(m^2+2)-2& \frac13 m^2(m^2-1)\\
&&&&\\ \hline
\end{array}\]}}
\end{center}
\end{table}
\end{rem}


\section{Concluding remarks}
We examined graphs, association schemes, and orthogonal arrays as possible sources of partial geometric designs. At the end of our search we focused on the orthogonal arrays associated with a particular family of three-class association schemes. However, as we mentioned in the Introduction, there are graphs that give rise to partial geometric designs but are not realized as relation graphs of association schemes. In fact, there are many such graphs obtained from orthogonal arrays. On the set of runs of an orthogonal array, by defining the adjacency of any two codewords according to their Hamming distance, we obtain many graphs that give rise to partial geometric designs. So as not to extend this paper much further, we shall close with just one example. As such examples are abundant, further research is required on the topic.
\begin{ex} Consider the $[7,5]$-code $\mc{C}$ over $\mathbb{F}_3$ with generator matrix:
\begin{center}		
$ \left(\begin{array}{rrrrrrr}
1 & 0 & 0 & 0 & 0 & 2 & 1 \\
0 & 1 & 0 & 0 & 0 & 1 & 1 \\
0 & 0 & 1 & 0 & 0 & 1 & 1 \\
0 & 0 & 0 & 1 & 0 & 1 & 0 \\
0 & 0 & 0 & 0 & 1 & 1 & 0 \\
\end{array}\right)$
\end{center}
This code $\mc{C}$ has weight distribution $(1, 0, 12, 34, 42, 96, 46, 12)$ and dual weight distribution $(1, 0, 0, 2, 0, 0, 6, 0)$. Hence it gives an OA$(3^5, 7, 3, 2)$ with $\lambda=3^3$.
As before, the code $\mc{C}$ (or any code that are equivalent to $\mc{C}$) define $X = \mc{C}$ and
\begin{eqnarray*}
R_0 &=& \{(x,x) \hs | \hs x \in X\}\\
R_1 &=& \{(x,y) \hs | \hs \delta_{H}(x,y) \in \{1,4,7\}\} \\
R_2 &=& \{(x,y) \hs | \hs \delta_{H}(x,y)  \in \{2,5\} \} \\
R_3 &=& \{(x,y) \hs | \hs \delta_{H}(x,y) \in \{3,6\}\}
\end{eqnarray*}
Then the partition $\{R_i\}_{0 \leq i \leq 3}$ of $X\times X$ does not form an association scheme. However, the adjacency matrices $A_i$ of the graphs $(X, R_i)$ for $i=1, 2, 3$ satisfy the following identities:
\begin{eqnarray*}
A_1^3 &=&1215A_1 +486(J-A_1),\\
A_2^3&=& 5589A_2+4860(J-A_2),\\
(A_3+I)^3&=& 2673(A_3+I)+1944(J-A_3-I).
\end{eqnarray*}
Thus, taking each of $A_1, A_2$ and $A_3+I$ as the incidence matrix of a symmetric design, we get three partial geometric designs with parameters
\[(v, k; \alpha, \beta)=(243, 54; 486, 1215), (243, 108; 4860, 5589), (243, 81; 1944, 2673).\]
\end{ex}


\end{document}